\newcommand{\be}{\begin{equation}}
\newcommand{\ee}{\end{equation}}
\newcommand{\ba}{\begin{array}}
\newcommand{\ea}{\end{array}}
\newcommand{\comment}[1]{}
\newcommand{\B}[1]{\mbox{\boldmath $#1$}}
\journalname{Adv. Comp. Math.}
\begin{document}

\title{Computing the Reciprocal of a $\phi$-function by Rational 
Approximation\thanks{The first and third author acknowledge support by the INdAM-GNCS 2019
 project {\em Analisi di matrici sparse e data-sparse: metodi numerici ed 
applicazioni}.}}

\author{Paola Boito         \and
        Yuli Eidelman  \and
        Luca Gemignani
}

\authorrunning{P. Boito, Y. Eidelman and L. Gemignani} 

\institute{P. Boito \at
Dipartimento di Matematica, Universit\`{a} di Pisa,
Largo Bruno Pontecorvo, 5 - 56127 Pisa, Italy\\
\email{paola.boito@unipi.it}
\and
Y. Eidelman \at
School of Mathematical Sciences, Raymond and Beverly
Sackler Faculty of Exact Sciences, Tel-Aviv University, Ramat-Aviv,
69978, Israel\\
\email{eideyu@tauex.tau.ac.il}
\and
L. Gemignani \at
Dipartimento di Informatica, Universit\`{a} di Pisa,
Largo Bruno Pontecorvo, 3 - 56127 Pisa, Italy\\
\email{luca.gemignani@unipi.it}
}

\date{Received: date / Accepted: date}



\maketitle

\begin{abstract}
In this paper we  introduce a family of rational  approximations of 
the reciprocal of a $\phi$-function  involved in  the explicit solutions of  
certain linear differential equations,  as well as in integration schemes 
evolving on manifolds. The derivation and properties of this family of approximations applied to scalar and matrix arguments are presented.
Moreover, we 
show that the matrix functions computed by these approximations exhibit decaying properties  comparable to  the 
best  existing theoretical bounds. Numerical examples highlight the benefits of the proposed rational  approximations  w.r.t.~the classical  Taylor polynomials and other rational functions. 
\keywords{Matrix functions \and Rational approximation \and Structured matrices}
\subclass{MSC 65F60}
\end{abstract}

\section{Introduction}\label{sec:intro}
Numerical methods for the  computation of matrix functions have witnessed
growing interest in recent years (see \cite{HighamBook}, \cite{HighamCatalogue}
and the references given therein). 
One important class of applications is the solution of some classical problems for  ordinary or partial 
differential equations. Several methods have been developed for the 
evaluation of the matrix $\phi$-functions $\phi_k(A)$, $k\geq 0$, where $A$ is a large and possibly sparse matrix, and $\phi_k(z)$ are entire functions defined recursively by $\phi_{k+1}(z)=\displaystyle\frac{\phi_k(z)-(1/k!)}{z}$
with $\phi_0(z)=e^z$ \cite{HoOs}.

Here we  focus on  the related  issue of  approximating  the matrix function 
$\psi_1(A)$  where $\psi_1(z)$ is a meromorphic
function defined as the reciprocal of $\phi_1(z)$, that is,
\[
\psi_1(z)=\phi_1(z)^{-1}=\frac{z}{e^z-1}
\]
and $A$ is banded or more generally rank-structured (see \cite{EGH_book}  for a
survey  on such matrices).   This  problem  also  plays an important  role
in a number of applications. We describe two of these  applications in more 
detail.

\subsection{Applications}
Two-point inverse problems for first 
order differential equations are frequently encountered in mathematical 
physics (see Chapter 7 in \cite{POV_book}). As a model, in this paper
we consider the  differential  problem 
\begin{equation}\label{bvp}
  \displaystyle\dv{\B u}{t}=A \B u(t) +\B p, \quad  0\leq t\leq \tau,
  \end{equation}
where $A\in \mathbb R^{d\times d}$ is given, while 
$\B p\in  \mathbb R^d$ is unknown.   In order to find the solution
$\B u\colon [0, \tau]\rightarrow \mathbb R^d$  of \eqref{bvp}
 and the vector $\B p$  simultaneously,
the overdetermined conditions
\begin{equation}\label{bvpc}
  \B u(0)=\B u_0=\B g, \quad \B u(\tau)=\B h,
\end{equation}
can be imposed.  Note that a more general formulation of the inverse problem
\eqref{bvp}, \eqref{bvpc} in a Banach space
with a closed linear operator $A$ is treated in \cite{TE1,T1}, whereas 
a new formula for the solution of the problem (\ref{bvp}),
(\ref{bvpc}) using Bernoulli polynomials is given in \cite{ETS}.

Now assume that the complex numbers
\begin{equation}\label{cnm}
2\pi ik/\tau,\quad k=\pm1,\pm2,\dots
\end{equation}
do not belong to the spectrum of $A$. Define the complex-valued functions
\begin{equation}\label{qw}
q_t(z)=\frac{z}{e^{\tau z}-1}e^{zt},\; 
w_t(z)=\frac{e^{zt}-1}{e^{\tau z}-1},\quad 0\le t\le\tau,\;z\in\mathbb{C}
\end{equation}
with $q_t(0)=\frac1{\tau},\;w_t(0)=\frac{t}{\tau}$. The complex 
functions $q_t(z),w_t(z)$ are meromorphic in $z$ with poles (\ref{cnm}).
One can check directly that the solution of the inverse problem (\ref{bvp}), 
(\ref{bvpc}) is given by the formulas
\begin{equation}\label{fm1}
\B p=q_0(A)(\B h-\B g)-A\B g
\end{equation}
and
\begin{equation}\label{fm2}
\B u(t)=w_t(A)(\B h-\B g)+\B g,\quad 0\le t\le\tau.
\end{equation}
Using (\ref{fm1}) and the formula $q_0(z)=\psi_1(\tau z)/\tau$ we obtain the formula
\begin{equation}\label{pformula}
\B p= \frac{1}{\tau}\psi_1(\tau A)\left(\B h-\B g\right)-A\B g. 
\end{equation}
to compute the unknown vector $\B p$ via the function $\psi_1$.

Notice also that the formula
$$
\B v(t)=q_t(A)\B v_0,\quad 0\le t\le\tau
$$
yields the solution of the nonlocal problem 
\begin{equation}\label{laris}
  \displaystyle\dv{\B v}{t}=A \B v(t), \;  0\leq t\leq \tau,\quad
\int_0^{\tau}v(t)\;dt=v_0
\end{equation}
studied by the authors in \cite{BEG2018}.

Computing the inverse of $\phi_1(A)$, with $A\in \mathbb R^{d\times d}$, is also a fundamental task in the application of exponential integrators for the 
numerical solution of systems of differential equations. The reason is twofold.
First,  certain  integration schemes called Runge-Kutta Munthe-Kaas (RKMK) methods 
\cite{M1,M2,IN} for computing numerical solutions  of differential equations 
that are guaranteed to evolve  on a prescribed manifold  require explicitly  
the approximation of the function $\psi_1$ applied to a matrix. More precisely, suppose that $G$ is a finite-dimensional Lie group acting transitively on a smooth manifold $\mathcal{M}$. In many classical examples, ${\mathcal M}=G$ is a matrix Lie group, acting on itself by left or right multiplication. Denote by $\mathfrak{g}$ the Lie algebra of $G$ and let $p$ be a fixed base point in $\mathcal{M}$. Any smooth curve $y(t)$ on $\mathcal{M}$ in a neighborhood of $p$ can be seen as the image of a curve $\sigma(t)$ through the origin of $\mathfrak{g}$ via the exponential mapping:
$$y(t)=\exp(\sigma(t))\cdot p,\qquad \sigma(0)=0.$$
A differential equation for $y(t)$ on the manifold takes the form
$\frac{\rm d}{{\rm d}t}y=F(y)$, where $F$ is a vector field on $\mathcal{M}$,
and it can be reformulated as a differential equation for $\sigma(t)$:
$$
\frac{\rm d}{{\rm d}t}\sigma(t)={\rm d}\exp_{\sigma}^{-1}(f(\exp(\sigma)\cdot p)),
$$
where $f:\mathcal{M}\longrightarrow\mathfrak{g}$ is a suitable representation of the vector field $F$; see e.g. \cite{M2} or \cite{celle} for details.
This equation holds on a linear space, where one may apply a standard Runge-Kutta method. A crucial step in doing so is the evaluation of the reciprocal of the differential of the exponential map:
$$
{\rm d}\exp_{\sigma}^{-1}(v)=\left.\frac{z}{\exp(z)-1}\right|_{z={\rm ad}_{\sigma}}v,
$$
where ${\rm ad}_{\sigma}$ denotes the commutator:
${\rm ad}_{\sigma}(v)=[\sigma,v]$. In other words, in the matrix manifold case each RKMK step requires the computation of $\psi_1([M,N])$, where $M\in \mathbb R^{d\times d}$ is fixed and $[M,N]=MN-NM$ is the matrix commutator.  
Including the evaluation of the map $\psi_1([M,N])$  
in numerical algorithms  seems to be awkward and several polynomial 
approximations of $\psi_1(z)$ have been presented in the  related literature 
(compare with \cite{celle} and the references given therein).

As a second remark on the role of approximating $\psi_1(A)$ in the context of exponential integrators, we observe that the 
study of reliable  procedures for the evaluation of  $\psi_1(A)$  and   
$\psi_1(A)\B v$  based on rational approximations of the meromorphic function
$\psi_1(z)$  might  be used to foster the development of rational Krylov 
methods for  computing  $\phi_1(A)$  and   $\phi_1(A)\B v$, which is the main 
computational bulk in exponential integrators for stiff systems of differential
equations \cite{GG}.   Indeed the  properties of these methods depend heavily 
on the features of the underlying rational approximations  for the selection of
the  poles and of the subspace of approximation. 

\subsection{Approximation of $\psi_1(A)$}
Customary approximations of $\psi_1(A)$  derived from truncated  Taylor  
expansions   go back to the work of
 Magnus \cite{Mag}.
 These approximations are quite accurate if the norm of the matrix $A$ is sufficiently small.  On the other hand, rational functions
 may exhibit approximation properties  and convergence domains superior to polynomials
provided that the poles of the rational functions
involved have been chosen in a suitable way.
Moreover, if $A$ is banded or even just rank-structured
then the same property  holds   in a certain  approximate sense for the matrix
$\phi_1(A)$ and thus a fortiori for its  inverse $\psi_1(A)$. Polynomial approximations for the  function  $\psi_1(z)$
often require a quite high degree of the approximating polynomial in order to achieve
a reasonable quality of approximation of the numerical rank structure and the decaying properties of the matrix $\psi_1(A)$.
Rational
approximations would  typically obtain the
same quality with substantially fewer degrees of freedom.

In this paper we present new algorithms that efficiently approximate the  functions  of a matrix argument
involved in the solution of  (\ref{bvp}),(\ref{bvpc}).   In particular,
we propose a  novel family of fixed-poles mixed polynomial-rational approximations of $\psi_1(z)$  required for the
computation of the vector $\B p$  according to
\eqref{pformula}.
By combining
Fourier analysis methods applied to the function $q_t(z)/z$
with classical tools for   Fourier series acceleration  \cite{ECK}  for any fixed  $s>1$ and  $m\geq s$
we obtain  approximations of $\psi_1(A)=(\phi_1(A))^{-1}$ of the form
\begin{equation}\label{appformula}
\psi_1(A)\simeq p_s(A)+\sum_{k=1}^m \gamma_{k,s}A^{\tau_s}(A^2 +k^2 I_d)^{-1},
\end{equation}
where $p_s(z)$ is a polynomial of degree $\ell=\ell(s)$ and $\tau_s=\tau(s)\in \mathbb N$.   These  novel  expansions
compares favorably with polynomial approximations based on the Maclaurin series as well as  other rational Pad\'e  approximants determined by
inverting the approximation of $\phi_1(A)$. 
Specifically: 
\begin{enumerate}
 \item  Theory and  numerical evidence show that
the formulas \eqref{appformula}  are accurate  on
larger domains  than their polynomial counterparts  thus allowing for larger  steps in integration schemes.
\item  Typically rational approximations  based on  \eqref{appformula} and Pad\'e   techniques  behave similarly.  However,
  the computation $\psi_1(A)$ and $\psi_1(A)\B v$ by means of \eqref{appformula} is insensitive of  numerical difficulties due to the conditioning
  of $\phi_1(A)$. Also, computations  based on \eqref{appformula} are inherently parallelizable. 
\item  Besides this, the scheme  \eqref{appformula} can be applied easily and efficiently to
  remarkable classes of matrices including band,
  rank structured and displacement structured matrices, which are often found in applications
  (e.g., from discretization of differential operators). Indeed, fast and robust inversion algorithms are available for these classes of matrices, together with cheap storage techniques.
  In particular,  when $A$ is rank-structured
  the action of the matrix $\psi_1(A)$ on a  vector can be computed efficiently using the  direct fast solver for
  shifted linear systems proposed in \cite{BEG2018}.  
  \item For  a symmetric banded matrix $A$
these novel approximations \eqref{appformula}
  yield a  computable reconstruction of the  associated matrix function  $\psi_1(A)$ which exhibits
  decaying properties  comparable to the best  existing theoretical bounds and significantly superior to
  the behavior of the  corresponding polynomial approximations. The matrix  function $\psi_1(A)$ can  thus be
  manipulated efficiently using  its resulting  data-sparse format  combined
  with the rank-structured matrix technology \cite{EGH_book}.
  \item Note that the rational part of \eqref{appformula} has poles $\{\pm ik\}_{k=1,\ldots,m}$. The choice of a fixed set of poles can be advantageous in view of application to rational Krylov methods for computing $\psi_1(A)$ or $\psi_1(A)\B v$, as well as for error analysis. 
\end{enumerate}

\subsection{Structure of the paper}
  The paper is organized as follows. In Section \ref{twos} we
  present a general  scheme for the design of accurate rational approximations
  of the matrix functions involved in the solution of
  (\ref{bvp}),(\ref{bvpc}). In  Subsection \ref{sub1}   this scheme is specialized  for the construction
  of mixed polynomial-rational approximations  of
  the meromorphic function $\psi_1(z)$. An application to a multi-degree of freedom physical system is illustrated in Section \ref{sec:multidof}. In Section \ref{three} we investigate  both theoretical and computational properties of
  the application of mixed polynomial-rational  formulas   to computing $\psi_1(A)$ where  $A$ is a symmetric banded matrix. Finally,
  conclusions and future work are presented in Section \ref{four}.

\section{Rational Approximation of the Inverse  Problem and the Reciprocal of 
the $\phi_1$-function}\label{twos}


The solvability of the  inverse problem \eqref{bvp}, \eqref{bvpc} in an 
abstract Banach space is studied in \cite{TE1,T1,ETS}. Under the assumption 
that all the numbers (\ref{cnm}) are regular 
points of the  linear operator $A$,   the  inverse problem \eqref{bvp},
\eqref{bvpc} has a unique solution.  Without loss of generality one can assume 
that $\tau=2\pi$. As it was mentioned above in the matrix case this solution is
given by the formulas (\ref{qw}), (\ref{fm1}), (\ref{fm2}). 

We introduce the auxiliary function   
\begin{equation}\label{r}
r_t(z)=\frac{e^{zt}}{e^{z2\pi}-1},\quad 0\le t\le 2\pi.
\end{equation}
Using the formulas \eqref{qw} we have
\begin{equation}\label{rqw}
q_t(z)=zr_t(z),\;w_t(z)=r_t(z)-r_0(z),\quad 0\le t\le 2\pi.
\end{equation}
 Expanding the function $r_t(z)$ in the Fourier series of $t$ we obtain 
\[
r_t(z)=\frac1{2\pi}\left(\frac{1}{z}+\sum_{k\in{\mathbb Z}/\{0\}}
\frac{e^{ikt}}{z-ik}\right),\quad 0<t<2\pi.
\]
We consider the equivalent representation with the real series  given by 
\begin{equation}\label{realexp}
r_t(z)=\frac1{2\pi z}+\frac1{\pi}\sum_{k=1}^{\infty}(z\cos(k t)-k\sin (k t))
(z^2+k^2)^{-1},\quad 0<t<2\pi.
\end{equation}
It is well known that the 
convergence of this series  must depend strongly on the smoothness of the  
periodic extension of $r_t(z)$. Acceleration techniques proposed in \cite{ECK} 
make use of the Bernoulli polynomials for the approximate reconstruction of 
jumps.  

Applying the formula 
\begin{equation}\label{fm5}
\frac{1}{z^2+k^2}=\frac1{k^2}-\frac{z^2}{k^2(z^2+k^2)}
\end{equation}
to the last entry in (\ref{realexp}) we obtain that for $0<t<2\pi$ it holds
\[
r_t(z)=\frac1{2\pi z}+\frac1{\pi}
\sum_{k=1}^{\infty}(z\cos(k t)+\frac{1}{k}z^2\sin (k t)))(z^2+k^2)^{-1}-
\frac1{\pi}\sum_{k=1}^{\infty}\frac{1}{k}\sin(k t). 
\]
Since
\[
2\sum_{k=1}^{\infty}\frac1{k}\sin kt=\pi-t, \quad 0<t<2\pi
\]
we  arrive at the following formula for $0<t<2\pi$, 
\begin{equation}\label{el2}
r_t(z)=\frac1{2\pi z}+\frac{t-\pi}{2\pi}+y_t(z)
\end{equation}
with
\begin{equation}\label{mash}
y_t(z)=\frac1{\pi}\sum_{k=1}^{\infty}(z\cos(k t)+\frac{1}{k}z^2\sin(k t))
(z^2+k^2)^{-1}.
\end{equation}
 If $z\in K\subset \mathbb C$, $K$ compact set,  then  definitively we have  
\begin{equation}\label{ir1}
\left| z-\mathrm i k \right|^{-1}\le\frac{C}{|k|},
\end{equation}
and, therefore, using the Weierstrass M-test one can  easily check
that the series in (\ref{el2}) converges uniformly in 
$(t, z)\in[0,2\pi]\times K$. Hence,  by  continuity we  may extend the formula
\eqref{mash} over the whole interval  $[0,2\pi]$.

Combining the formulas (\ref{qw}) and (\ref{el2}) we get
\begin{equation}\label{mash1}
q_t(z)=\frac1{2\pi}+z\frac{t-\pi}{2\pi}+zy_t(z)
\end{equation}
and
\begin{equation}\label{mash2}
w_t(z)=\frac{t}{2\pi}+(y_t(z)-y_0(z)).
\end{equation}
Here there are no singularities at $z=0$. Inserting (\ref{mash1}), 
(\ref{mash2}) in (\ref{fm1}), (\ref{fm2}) we obtain the formulas for the 
solution of the inverse problems
\begin{equation}\label{fm3}
\B p=(\frac1{2\pi}I-\frac1{2}A+Ay_0(A))(\B h-\B g)-A\B g
\end{equation}
and
\begin{equation}\label{fm4}
\B u(t)=\left(\frac{t}{2\pi}I+(y_t(A)-y_0(A))\right)
(\B h-\B g)+\B g,\quad 0\le t\le2\pi
\end{equation}
with 
\begin{equation}\label{mash3}
y_t(A)=\frac1{\pi}\sum_{k=1}^{\infty}(A\cos(k t)+\frac{1}{k}A^2\sin(k t))
(A^2+k^2I)^{-1}.
\end{equation}

The rate of convergence of the series in \eqref{mash3} is the same as for the 
series $\sum_{k=1}^{\infty}k^{-2}$. It can be improved by using 
repeatedly the equality (\ref{fm5}) as above. For each  integer $k\geq 0$ 
denote as $B_k(t)$ the Bernoulli polynomials (extended by periodicity  onto the
real line) defined by
 \begin{equation}\label{poly}
 \frac{z e^{z t}}{e^z-1}=\sum_{k=0}^{+\infty} B_k(t) \frac{z^k}{k!}, \quad 
|t|<2 \pi, 
 \end{equation}
 \[
 B_k=B_k(0), \quad k\geq 0,
 \]
where $B_k$ are the Bernoulli numbers. Then,  using (\ref{fm5}) 
we prove by induction the following formulas. 
\begin{lemma}
  Let $t\in [0, 2\pi]$ and $y_t(z)\colon \mathbb C\rightarrow \mathbb C$  be defined as in
    \eqref{mash}.  Then  we have 
    \begin{equation}\label{nov27}
y_t(z)=p_{n,t}(z)+s_{n,t}(z),\quad n=0,1,2,\dots
\end{equation}
with 
\begin{equation}\label{novv27}
p_{n,t}(z)=\sum_{i=2}^{2n+1}\frac{(2\pi)^{i-1}}{i!}B_i
\left(\frac{t}{2\pi}\right)z^{i-1}
\end{equation}
and 
\begin{equation}\label{novvv27}
s_{n,t}(z)=\frac{(-1)^n}{\pi}
\sum_{k=1}^{\infty}\frac{z^{2n}(z\cos(k t)+\frac{1}{k}z^2\sin (k t)))}
{k^{2n}(z^2+k^2)}.
\end{equation}
\end{lemma}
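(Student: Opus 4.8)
The plan is to prove \eqref{nov27}–\eqref{novvv27} by induction on $n$, using the partial‑fraction identity \eqref{fm5} to peel off two terms of the series at each step and recognizing the detached part through the classical Fourier expansions of the Bernoulli polynomials. For the base case $n=0$ the sum defining $p_{0,t}(z)$ in \eqref{novv27} is empty, hence $p_{0,t}(z)=0$, while $s_{0,t}(z)$ from \eqref{novvv27} is literally the series \eqref{mash} defining $y_t(z)$; thus \eqref{nov27} holds for $n=0$.

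For the inductive step I would assume \eqref{nov27}–\eqref{novvv27} for a given $n$ and substitute \eqref{fm5} into the series defining $s_{n,t}(z)$. Since $|z^2+k^2|^{-1}=O(k^{-2})$ by \eqref{ir1}, all the series produced converge absolutely (uniformly for $z$ in a compact set and $t\in[0,2\pi]$), so the splitting may be carried out term by term. The part that picks up the extra factor $z^2/k^2$ is exactly $s_{n+1,t}(z)$ as in \eqref{novvv27} with $n$ replaced by $n+1$, the signs matching because $(-1)^n\cdot(-1)=(-1)^{n+1}$; the leftover part is
\[
T_n(z)=\frac{(-1)^n}{\pi}\,z^{2n+1}\sum_{k=1}^{\infty}\frac{\cos(kt)}{k^{2n+2}}
+\frac{(-1)^n}{\pi}\,z^{2n+2}\sum_{k=1}^{\infty}\frac{\sin(kt)}{k^{2n+3}},
\]
so that $s_{n,t}(z)=T_n(z)+s_{n+1,t}(z)$.

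Next I would invoke the standard Fourier series of the Bernoulli polynomials, which in view of \eqref{poly} read, for $m\ge1$ and $0\le x\le1$,
\[
\sum_{k=1}^{\infty}\frac{\cos(2\pi kx)}{k^{2m}}=\frac{(-1)^{m+1}(2\pi)^{2m}}{2\,(2m)!}B_{2m}(x),\qquad
\sum_{k=1}^{\infty}\frac{\sin(2\pi kx)}{k^{2m+1}}=\frac{(-1)^{m+1}(2\pi)^{2m+1}}{2\,(2m+1)!}B_{2m+1}(x),
\]
both valid on the closed interval since the series converge absolutely for the relevant indices $\geq 2$. Applying these with $m=n+1$ and $x=t/(2\pi)$, so that $2\pi kx=kt$, collapses $T_n(z)$ to
\[
T_n(z)=\frac{(2\pi)^{2n+1}}{(2n+2)!}B_{2n+2}\left(\frac{t}{2\pi}\right)z^{2n+1}
+\frac{(2\pi)^{2n+2}}{(2n+3)!}B_{2n+3}\left(\frac{t}{2\pi}\right)z^{2n+2},
\]
which is precisely $p_{n+1,t}(z)-p_{n,t}(z)$, i.e. the two terms $i=2n+2$ and $i=2n+3$ adjoined to the sum in \eqref{novv27}. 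Hence $y_t(z)=p_{n,t}(z)+T_n(z)+s_{n+1,t}(z)=p_{n+1,t}(z)+s_{n+1,t}(z)$, which closes the induction.

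The only delicate points are the legitimacy of the term‑by‑term manipulation of the series, which is handled exactly as in the Weierstrass M‑test argument preceding the lemma, and the index bookkeeping required to align the Fourier coefficients of $B_{2n+2}$ and $B_{2n+3}$ with the monomials $z^{2n+1}$ and $z^{2n+2}$; I do not expect any deeper obstacle.
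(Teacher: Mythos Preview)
Your proposal is correct and follows essentially the same route as the paper: induction on $n$, with the inductive step obtained by applying \eqref{fm5} to $s_{n,t}(z)$ and identifying the detached piece (your $T_n(z)$, the paper's $b_{n,t}(z)$) via the classical Fourier expansions of the Bernoulli polynomials. If anything, your write-up is slightly more careful than the paper's about the convergence justification and the sign/index bookkeeping.
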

\begin{proof}
For $n=0$ the relation (\ref{nov27}) follows directly from 
(\ref{mash}). Assume that for some $n\ge0$ the relation (\ref{nov27}) holds.
Using (\ref{fm5}) we have
\[
\begin{array}{c}
s_{n,t}(z)=\frac{(-1)^{n+1}}{\pi}
\displaystyle
\sum_{k=1}^{\infty}\frac{z^{2(n+1)}(z\cos(k t)+\frac{1}{k}z^2\sin (k t)))}
{k^{2(n+1)}(z^2+k^2)}+\\
\frac{(-1)^n}{\pi}\displaystyle
\sum_{k=1}^{\infty}\frac{z^{2n}(z\cos(k t)+\frac{1}{k}z^2\sin (k t)))}
{k^{2(n+1)}},
\end{array}
\]
i.e.
\begin{equation}\label{masha}
s_{n,t}(z)=s_{n+1,t}(z)+b_{n,t}(z),
\end{equation}
where 
\[
b_{n,t}(z)=\frac{(-1)^n}{\pi}z^{2n+1}
\sum_{k=1}^{\infty}\frac{\cos(k t)}{k^{2(n+1)}}+
\frac{(-1)^n}{\pi}z^{2n+2}
\sum_{k=1}^{\infty}\frac{\sin(k t)}{k^{2n+3}}.
\]
Using the formulas from [\cite{AS}, formula 23.1.18] we know that
\[
\sum_{k=1}^{\infty}\frac{\cos(k t)}{k^{2(n+1)}}=
\frac{(2\pi)^{2(n+1)} B_{2(n+1)}(t/(2\pi))}{(-1)^n 2 (2 (n+1))!}
\]
and
\[ 
\sum_{k=1}^{\infty}\frac{\sin(k t)}{k^{2n+3}}=
\frac{(2\pi)^{2n+3} B_{2(n+1)}(t/(2\pi)}{(-1)^n 2 (2n+3)!}.
\]

Hence it follows that 
\[
b_{n,t}(z)=\frac{z^{2n+1}(2\pi)^{2n+1} B_{2n+2}(t/(2\pi))}
{(2n+2)!}+\frac{z^{2n+2}(2\pi)^{2n+2} B_{2n+3)}(t/(2\pi))}
{(2n+3)!}.
\]
Inserting this in (\ref{masha}) and using (\ref{nov27})
we complete the proof of the statement.
\end{proof}



\subsection{The application to the  $\psi_1$-function}\label{sub1}

Observe that $q_0(z)=\psi_1(2 \pi z)/(2\pi)$ and
$\psi_1(z)$ admits a Maclaurin series expansion  which can virtually be used to evaluate
$q_0(A)$. The following classical result provides the  Maclaurin expansion of
$\psi_1(z)$.
\begin{theorem}[\cite{AS}, formula 23.1.1] It holds
  \[
   \psi_1(z)={\phi_1(z)}^{-1}=\sum_{k=0}^{+\infty} \frac{B_k}{k!}z^k, \quad |z|< 2 \pi, 
    \]
    where $B_k$ denotes  the $k$th  Bernoulli number. 
  \end{theorem}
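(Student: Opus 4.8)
The plan is to observe that the claimed expansion is, after the specialization $t=0$, nothing but the generating-function definition of the Bernoulli numbers already recorded in \eqref{poly}. First I would unwind the recursion defining the $\phi$-functions: from $\phi_0(z)=e^z$ and $\phi_{k+1}(z)=(\phi_k(z)-1/k!)/z$ one gets $\phi_1(z)=(e^z-1)/z$, so that $\psi_1(z)=\phi_1(z)^{-1}=z/(e^z-1)$, in accordance with the displayed formula; in particular this function has a removable singularity at the origin with $\psi_1(0)=1$, hence is genuinely analytic near $0$.

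Next I would put $t=0$ in \eqref{poly}. Since $B_k=B_k(0)$ by definition, \eqref{poly} becomes $\dfrac{z}{e^z-1}=\sum_{k=0}^{+\infty}\dfrac{B_k}{k!}z^k$, which is exactly the claimed identity. As a self-contained alternative not relying on \eqref{poly}, one multiplies the putative series by $e^z-1=\sum_{j\ge 1}z^j/j!$, forms the Cauchy product, and matches it against the left-hand side $z$; comparing coefficients of $z^{n+1}$ yields $B_0=1$ together with the classical recurrence $\sum_{j=0}^{n}\binom{n+1}{j}B_j=0$ for $n\ge 1$, which determines the $B_k$ uniquely and thereby identifies the power series as the Maclaurin expansion of $\psi_1$.

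It remains to pin down the radius of convergence. The function $z\mapsto z/(e^z-1)$ is meromorphic on $\mathbb C$, and its poles are precisely the nonzero zeros of $e^z-1$, i.e.\ the points $2\pi\mathrm i k$ with $k\in\mathbb Z\setminus\{0\}$ — the zero of $e^z-1$ at the origin being cancelled by the factor $z$. Hence $\psi_1$ is holomorphic on the open disc $\{|z|<2\pi\}$ and has singularities on its boundary, so by the standard Cauchy--Hadamard theory the Taylor series of $\psi_1$ about $0$ converges exactly on $|z|<2\pi$, which is the asserted range.

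The main — indeed essentially the only — point requiring care is bookkeeping: one must check that the normalization of the Bernoulli polynomials and numbers adopted in \eqref{poly} coincides with that of $[\cite{AS}, 23.1.1]$, and that the generating series in \eqref{poly} is read as a genuine power-series identity on $|z|<2\pi$ rather than a purely formal one. Granting this, the theorem follows immediately from the two steps above.
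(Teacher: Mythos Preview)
Your argument is correct. The paper does not actually prove this statement: it is quoted as a classical identity with a reference to \cite{AS}, formula 23.1.1, and no proof is supplied. Your derivation---specializing \eqref{poly} at $t=0$ to obtain the series and then reading off the radius of convergence from the location of the poles $2\pi i k$, $k\in\mathbb{Z}\setminus\{0\}$---is the standard one and is entirely sound. The only cosmetic point is that the condition ``$|t|<2\pi$'' appearing in \eqref{poly} is evidently a typographical slip for ``$|z|<2\pi$''; your reading of \eqref{poly} as a genuine convergent power-series identity in $z$ on that disc is the intended one, so the bookkeeping caveat you raise is not an obstacle.
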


Different  rational approximations of 
$\psi_1(z)$ can  be  derived  from the Fourier  series expansion of
 $r_t(z)$. It turns out that  such a  series representation  is also related with the
 Mittag-Leffler  expansion  of $q_0(z)$.

Specifying the formulas and representations obtained above to the function
$\psi_1(A)$ we obtain the following. Using the formula (\ref{mash1}) we have
\begin{equation}\label{nats}
q_t(A)=\frac1{2\pi}I+\frac{t-\pi}{2\pi}A+Ay_t(A)
\end{equation}
and using (\ref{mash3}) we find that
\begin{equation}\label{fone}
  \begin{array}{ll}
q_t(A)=\frac1{2\pi}I_d+\frac{t-\pi}{2\pi}A+\frac1{\pi}
\displaystyle\sum_{k=1}^{\infty}A^2\cos(k t)(A^2+ k^2I_d)^{-1}+\\
 +\frac1{\pi}\displaystyle
\sum_{k=1}^{\infty}\frac{1}{k}A^3\sin( k t)(A^2+k^2I_d)^{-1}, 
\quad 0\leq t\leq \tau=2\pi, 
  \end{array}
\end{equation}
which implies
\begin{equation}\label{fone1}
  \begin{array}{ll}
\psi_1(A)=2\pi q_0(A/(2\pi))=I_d-\frac{1}{2}A +
2\displaystyle\sum_{k=1}^\infty\left(\frac{A}{2 \pi}\right)^{2}
    (\left(\frac{A}{2 \pi}\right)^2+ k^2I_d)^{-1}. 
  \end{array}
\end{equation}
Relation \eqref{fone1}  is the first member of our family of rational 
approximations of $\psi_1(A)$. 

This result may be improved by applying repeatedly the same approach as above. Indeed
using (\ref{nats}) and (\ref{nov27}), (\ref{novv27}), (\ref{novvv27})
\[
\begin{array}{ll}
q_t(A)=\frac1{2\pi}I+\frac{t-\pi}{2\pi}A+\displaystyle
\sum_{i=2}^{2n+1}\frac{(2\pi)^{i-1}}{i!}B_i\left(\frac{t}{2\pi}\right)A^i+\\
\frac{(-1)^n}{\pi}\displaystyle
\sum_{k=1}^{\infty}\frac{A^{2n+1}(A\cos(k t)+\frac{1}{k}A^2\sin (k t)))}
{k^{2n}}(A^2+k^2I)^{-1}.
\end{array}
\]
Setting $t=0$ we get
\[
q_0(A)=\frac1{2\pi}I-\frac1{2}A+\displaystyle
\sum_{i=2}^{2n+1}\frac{(2\pi)^{i-1}}{i!}B_iA^i+
\frac{(-1)^n}{\pi}\displaystyle
\sum_{k=1}^{\infty}\frac{A^{2n+2}}{k^{2n}}(A^2+k^2I)^{-1}.
\]
Since for $n>1$ the odd Bernoulli numbers $B_n$ are zeroes we have
$$
\displaystyle\sum_{i=2}^{2n+1}\frac{(2\pi)^{i-1}}{i!}B_iA^i=
\displaystyle\sum_{i=0}^{n-1}
\frac{(2\pi)^{2i+1}}{(2(i+1))!}B_{(2(i+1))}A^{2(i+1)}.
$$

 Hence, using $\psi_1(A)={\phi_1( A)}^{-1}=2\pi q_0(A/(2\pi))$  we arrive at 
the  main  result of the present paper
\begin{theorem}\label{main}
   For any fixed $n>0$ it holds
\[
 \psi_1( A) =p_n(A) +
 2(-1)^{n}\displaystyle\sum_{k=1}^{\infty}\left(\frac{A}{2 \pi}\right)^{2(n+1)}
    \frac{1}{k^{2n}}(\left(\frac{A}{2 \pi}\right)^2+ k^2I_d)^{-1},
    \]
    where 
 \[
  p_n(A)=  I_d -\frac{1}{2} A+ 
\displaystyle\sum_{i=0}^{n-1} A^{2(i+1)} \frac{B_{2(i+1)}}{(2 (i+1))!}.
 \]
\end{theorem}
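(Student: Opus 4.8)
The plan is to assemble the identity from the ingredients already set up in Section~\ref{twos}, so that the only genuinely new content is the passage from scalar identities to matrix ones and the bookkeeping with Bernoulli numbers. I would start from the decomposition of $y_t(z)$ furnished by Lemma~1, namely $y_t(z)=p_{n,t}(z)+s_{n,t}(z)$ with $p_{n,t}$, $s_{n,t}$ as in \eqref{novv27}--\eqref{novvv27}, and substitute the matrix $A$ for $z$. This substitution is legitimate because the spectral hypothesis that \eqref{cnm} is disjoint from the spectrum of $A$ means each resolvent $(A^2+k^2I_d)^{-1}$ exists and satisfies $\|(A^2+k^2I_d)^{-1}\|\le C/k^2$ for $k$ large, so the operator series defining $s_{n,t}(A)$ converges in norm exactly as the scalar bound \eqref{ir1} shows; all termwise manipulations of Lemma~1 are thereby valid at the matrix level. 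Inserting this into \eqref{nats}, that is $q_t(A)=\tfrac1{2\pi}I_d+\tfrac{t-\pi}{2\pi}A+A\,y_t(A)$, yields a closed polynomial‑plus‑rational expression for $q_t(A)$, valid for every $t\in[0,2\pi]$.

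Next I would specialise to $t=0$: then $B_i(t/(2\pi))$ reduces to the Bernoulli number $B_i=B_i(0)$, while $\cos(kt)=1$ and $\sin(kt)=0$. After absorbing the factor $A$ from \eqref{nats}, the rational part collapses to $\tfrac{(-1)^n}{\pi}\sum_{k\ge1}A^{2n+2}k^{-2n}(A^2+k^2I_d)^{-1}$ and the polynomial part to $-\tfrac12 A+\sum_{i=2}^{2n+1}\tfrac{(2\pi)^{i-1}}{i!}B_iA^i$. Using that $B_i=0$ for every odd $i\ge 3$, the latter sum is rewritten as $\sum_{i=0}^{n-1}\tfrac{(2\pi)^{2i+1}}{(2(i+1))!}B_{2(i+1)}A^{2(i+1)}$, giving the intermediate formula for $q_0(A)$.

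Finally I would pass to $\psi_1$ via the scaling relation $\psi_1(z)=2\pi\,q_0(z/(2\pi))$, which is immediate from the definitions $q_0(z)=z/(e^{2\pi z}-1)$ and $\psi_1(z)=z/(e^{z}-1)$ and hence carries over to the matrix functional calculus. Replacing $A$ by $A/(2\pi)$ in the expression for $q_0(A)$ and multiplying by $2\pi$, the powers of $2\pi$ telescope: the constant term becomes $I_d$, the linear term reproduces $-\tfrac12 A$, each coefficient $\tfrac{(2\pi)^{2i+1}}{(2(i+1))!}B_{2(i+1)}$ picks up a factor $2\pi\,(2\pi)^{-2(i+1)}=(2\pi)^{-(2i+1)}$ and collapses to $\tfrac{B_{2(i+1)}}{(2(i+1))!}$, while the rational sum acquires the prefactor $2(-1)^n$ and the argument $A/(2\pi)$. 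This is precisely the claimed identity with $p_n(A)=I_d-\tfrac12 A+\sum_{i=0}^{n-1}A^{2(i+1)}\tfrac{B_{2(i+1)}}{(2(i+1))!}$. The only point requiring care — and the natural place for the write‑up to slow down — is justifying that the uniform convergence of the scalar series established around \eqref{ir1} transfers to norm convergence of the associated operator series, so that absorbing the factor $A$, truncating and reindexing the sums, and substituting $A\mapsto A/(2\pi)$ are all admissible; once the scalar version of the identity is secured, the lift to $A$ is routine holomorphic functional calculus on a contour enclosing the spectrum of $A$ and avoiding the poles \eqref{cnm}.
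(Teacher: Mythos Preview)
Your proposal is correct and follows essentially the same route as the paper: combine Lemma~1 with \eqref{nats} to express $q_t(A)$, set $t=0$ so that the Bernoulli polynomials reduce to Bernoulli numbers and the sine terms drop, use $B_i=0$ for odd $i\ge 3$ to rewrite the polynomial part, and then rescale via $\psi_1(A)=2\pi\,q_0(A/(2\pi))$. The only difference is that you make explicit the functional-calculus justification for passing from scalar to matrix identities, which the paper leaves implicit.
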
   

 Observe that $p_n(A)$ is the  classical approximation of  $\psi_1(A)$ given  
in  Theorem 1.  Also notice that the rate of convergence of the series is the 
same as for the series $\sum_{k=1}^{\infty}k^{-2(n+1)}$ where $2n$ is the 
degree of the polynomial approximation. The above result presents a rational 
correction of this  approximation  aimed to improve its  convergence 
properties. Specifically, based on Theorem \ref{main} we introduce the 
following family $\{\psi_{n, s}(A)\}_{(n,s)\in \mathbb N\times \mathbb N}$
 of mixed polynomial-rational approximations of $\psi_1(A)$:
 \begin{equation}\label{eq:approx}
 \psi_{n, s}(A)=p_n(A) + 2 (-1)^{n}\left(\displaystyle\sum_{k=1}^{s}
 \frac{1}{k^{2n}}\left(\left(\frac{A}{2 \pi}\right)^2+ k^2I_d\right)^{-1}\right)
\left(\frac{A}{2 \pi}\right)^{2(n+1)}.
 \end{equation}
 
 \begin{remark}
   The  above approach based on the Fourier series expansion of $q_t(z)/z$  encompasses  some 
   rational approximations of $\psi_1(z)$  which can 
   also be derived by applying Mittag-Leffler pole decomposition
   (see e.g., \cite{arfken} for a concise, hands-on presentation) to the function $q_0(z)$. More precisely, 
 let us apply formula (7.54) in  \cite{arfken} to $q_0(z)=\psi(2\pi z)=\frac{2\pi z}{e^{2\pi z-1}}$ with $p=1$.
 The poles of our function are
 $\{ik\}_{k\in\mathbb{Z}\setminus\{0\}}$ and the corresponding
 residues are readily seen to be $\{ik\}_{k\in\mathbb{Z}\setminus\{0\}}$ as well.
So we have
\begin{eqnarray*}
&&q_0(z)=q_0(0)+zq'(0)+\sum_{k\in\mathbb{Z}\setminus\{0\}}\frac{ikz^2/(ik)^2}{z-ik}=\\
&&=1-\pi z+ \sum_{k\in\mathbb{Z}\setminus\{0\}}\frac{-iz^2}{k(z-ik)}= \\
&&=1-\pi z + \sum_{k=1}^{\infty}\frac{2z^2}{z^2+k^2},\\
\end{eqnarray*}
which is exactly formula \eqref{el2} with $t=0$. 
At this point we can apply \eqref{fm5}  and proceed as above (again with $t=0$) to obtain: 
\begin{eqnarray}
&&q_0(z)=1-\pi z+2\sum_{k=1}^{\infty}z^2\left(\sum_{i=0}^{n-1}(-1)^i\frac{z^{2i}}{k^{2i+2}}+(-1)^n\frac{z^{2n}}{k^{2n}(z^2+k^2)}\right)=\nonumber \\
&&=1-\pi z+2\sum_{i=0}^{n-1}(-1)^iz^{2i+2}\sum_{k=1}^{\infty}\frac{1}{k^{2i+2}}+2(-1)^n z^{2n}\sum_{k=1}^{\infty}\frac{1}{k^{2n}(z^2+k^2)}\nonumber \\
&&=1-\pi z+2\sum_{i=0}^{n-1}(-1)^iz^{2i+2}\zeta(2i+2)+2(-1)^n z^{2n}\sum_{k=1}^{\infty}\frac{1}{k^{2n}(z^2+k^2)},\label{series}
\end{eqnarray}
where $\zeta$ denotes the Riemann zeta function.
  Now recall that even-indexed Bernoulli numbers are characterized by the relation
\begin{equation}
B_{2\ell}=\frac{(-1)^{\ell-1}(2\ell)!}{2^{2\ell-1}\pi^{2\ell}}\zeta(2\ell)\label{bernoulli}
\end{equation}
(see e.g. \cite{gradshteyn}, item 9.616), whereas the odd-indexed ones are zero except for $B_1=-\frac12$. From \eqref{bernoulli} we deduce
\begin{equation}
\zeta(2\ell)=\frac{B_{2\ell}(-1)^{\ell-1}2^{2\ell-1}\pi^{2\ell}}{(2\ell)!}\label{zeta}
\end{equation}
and by plugging \eqref{zeta} with $\ell=i+1$ in equation \eqref{series} we obtain 
$$
q_0(z)=1-\pi z+\sum_{i=0}^{n-1}\frac{B_{2i+2}(2\pi z)^{2i+2}}{(2i+2)!}+2(-1)^n z^{2n}\sum_{k=1}^{\infty}\frac{1}{k^{2n}(z^2+k^2)},
$$
which is essentially the same mixed polynomial-rational development as in Theorem \ref{main}, in scalar form.
 \end{remark}
 

\begin{figure}
\begin{center}
\includegraphics[width=0.9\textwidth]{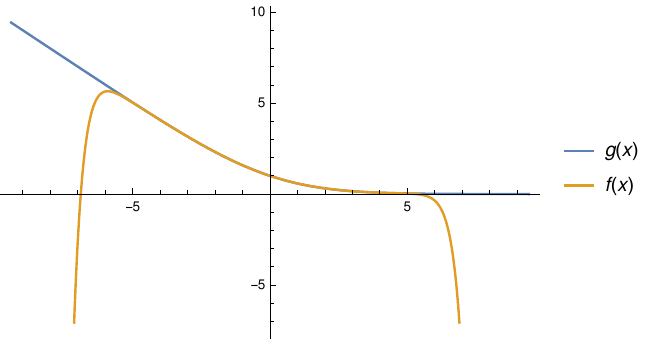}
\caption{Polynomial approximation $f(x)=\psi_{20,0}(x)$ against the function $g(x)=\psi_1(x)$.}\label{fig1}
\end{center}
\end{figure}
\begin{figure}
\begin{center}
\includegraphics[width=0.9\textwidth]{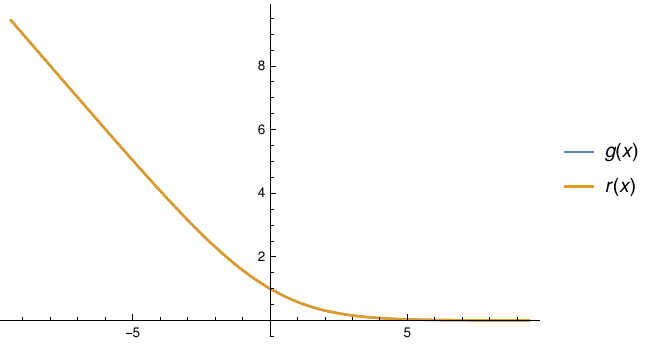}
\caption{Rational approximation $r(x)=\psi_{4,16}(x)$ against the function $g(x)=\psi_1(x)$.  The two plots overlap.}\label{fig2}
\end{center}
\end{figure}

\subsection{Numerical experiments}
We begin by testing the behavior of mixed approximations applied to scalar (real or complex) arguments. 
 
In  Figures \ref{fig1} and \ref{fig2}  we  show the plot over the interval $ [-3 \pi,   3 \pi]$
 of the functions $g(x)=\psi_1(x)=\displaystyle\frac{x}{e^x -1}$, 
 its  polynomial approximation  $f(x)=\psi_{20,0}(x)$ and its  rational approximation $r(x)=\psi_{4,16}(x)$. Clearly,
 the  rational approximation performs better when the points are close to the border of the convergence disk of
 the Maclaurin series  given in Theorem 1. This same phenomenon  can be observed in the complex plane. 
 In Figure \ref{fig4} we  illustrate the absolute error of rational approximation
 at complex points $x=a +\mathrm i b$ with $a, b \in [-3 \pi,   3 \pi]$. 

\begin{figure}
\begin{center}
\includegraphics[width=0.6\textwidth]{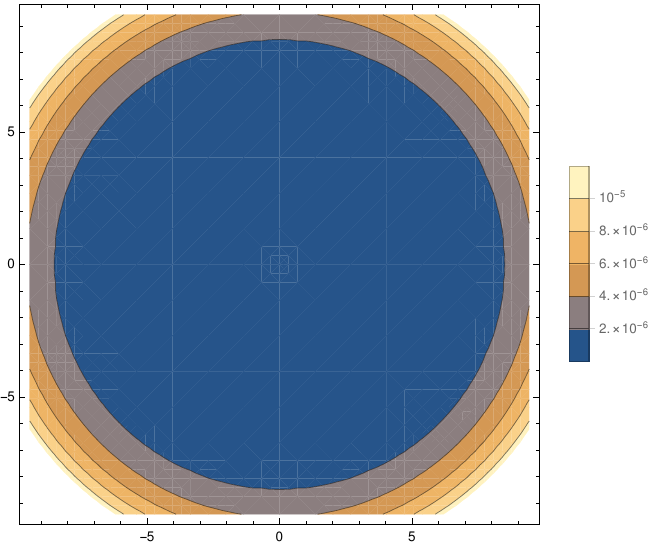}
\caption{Absolute error of rational  approximation. }\label{fig4}
\end{center}
\end{figure}

\begin{remark}\label{rem:degrees}
It is interesting to compare the complexity of computing a mixed approximation $\psi_{n,s}(A)$ and a classical rational approximation to $\phi_1(A)$, such as diagonal $(k,k)-$Pad\'e, when $A$ is a large structured matrix. 

Suppose for instance that $A$ has size $d\times d$ with quasiseparable rank $h$, in which case structured inversion of $A$ requires $\mathcal{O}(h^2 d)$ operations. Recall that a polynomial of degree $k$ applied to $A$ yields a structured matrix of quasiseparable rank $hk$. Then the cost of applying a $(k,k)-$Pad\'e approximation $N(z)/D(z)$ to $A$ is dominated by the computation of $N(A)D(A)^{-1}$, which requires $\mathcal{O}(h^2 k^2 d)$ operations. On the other hand, the evaluation of $\psi_{n,s}(A)$, where $n$ is supposed to be small and constant, is dominated by the computation of the $s$ rational terms, whose cost amounts to $\mathcal{O}(s h^2 d)$ operations. In other words, the computational cost tends to grow quadratically with the degree of a Pad\'e approximation, whereas it grows linearly with the degree of a mixed approximation. 

\end{remark}

\begin{remark}\label{rem:mixedss}
A widespread approach to the computation of exponential and $\phi_\ell$ functions combines polynomial or Pad\'e approximation with a few steps of scaling-and-squaring \cite{higham2009scaling}. In principle, scaling-and-squaring may also be applied to our mixed polynomial-rational approximation, scaling the function argument by a suitable power of $2$ and then making use of the squaring formulas
\begin{equation}
\psi_1(2z)=\frac{2\psi_1(z)}{e^z+1}=\frac{2\psi_1(z)^2}{z+2\psi_1(z)}.\label{psisquaring}
\end{equation}
See Figure \ref{fig:scalarapprox} for a numerical example.
 
Since mixed approximation is accurate on a larger domain than polynomial or Pad\'e, it requires a smaller number of squaring steps. This is a useful property, because in some cases each squaring step may contribute to significant error accumulation (see e.g., \cite{dieci2000pade,al2009new} and references therein). 

Unfortunately, the application of \eqref{psisquaring} when computing $\psi_1(A)$ or $\psi_1(A)\B v$ requires the inversion of the matrix $A+2\psi_1(A)$ or of $e^A+I$. Note however that the matrix $e^A+I$ will generally be well-conditioned for symmetric $A$, even when $A$ has negative eigenvalues.  
\end{remark}

\begin{figure}

\includegraphics[width=0.45\textwidth]{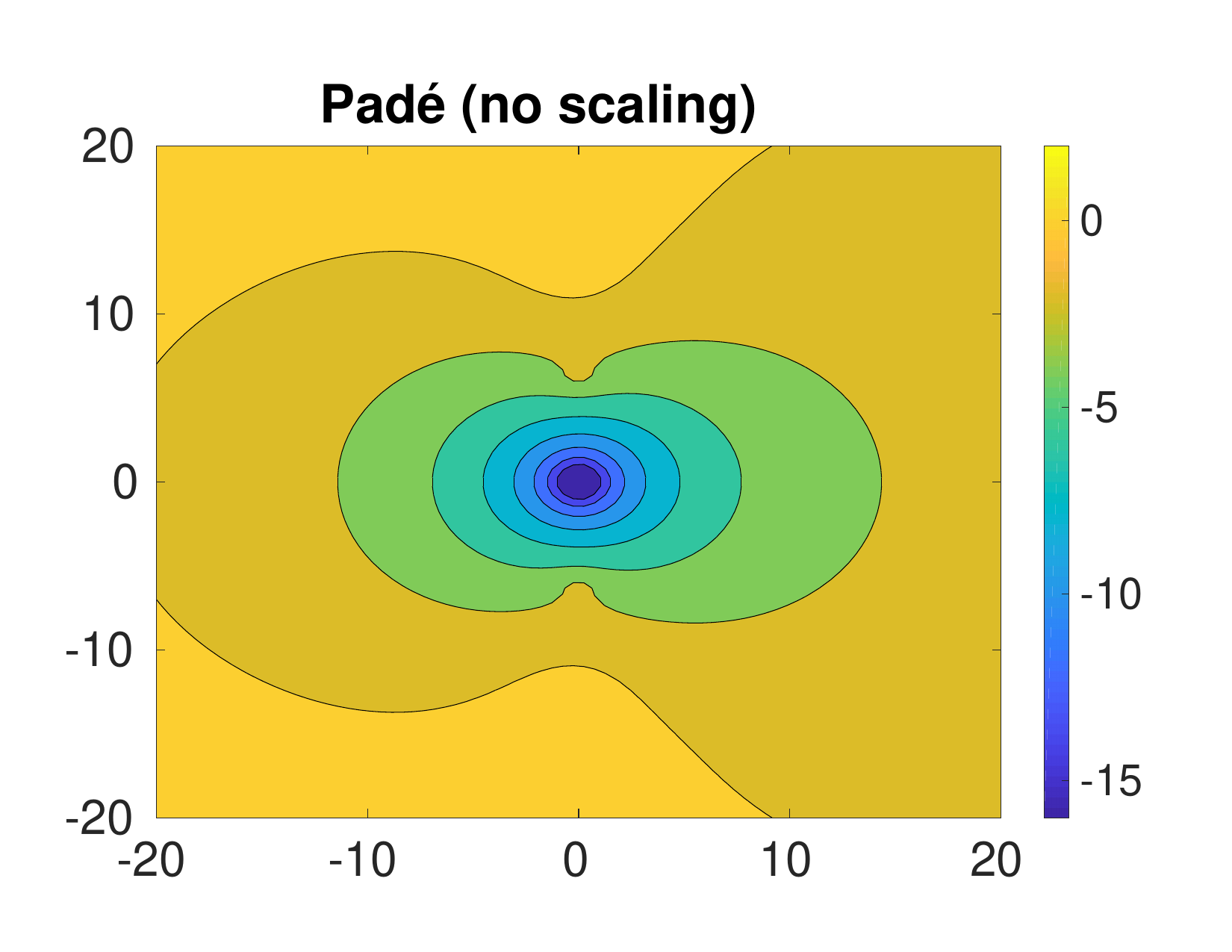} 
\includegraphics[width=0.45\textwidth]{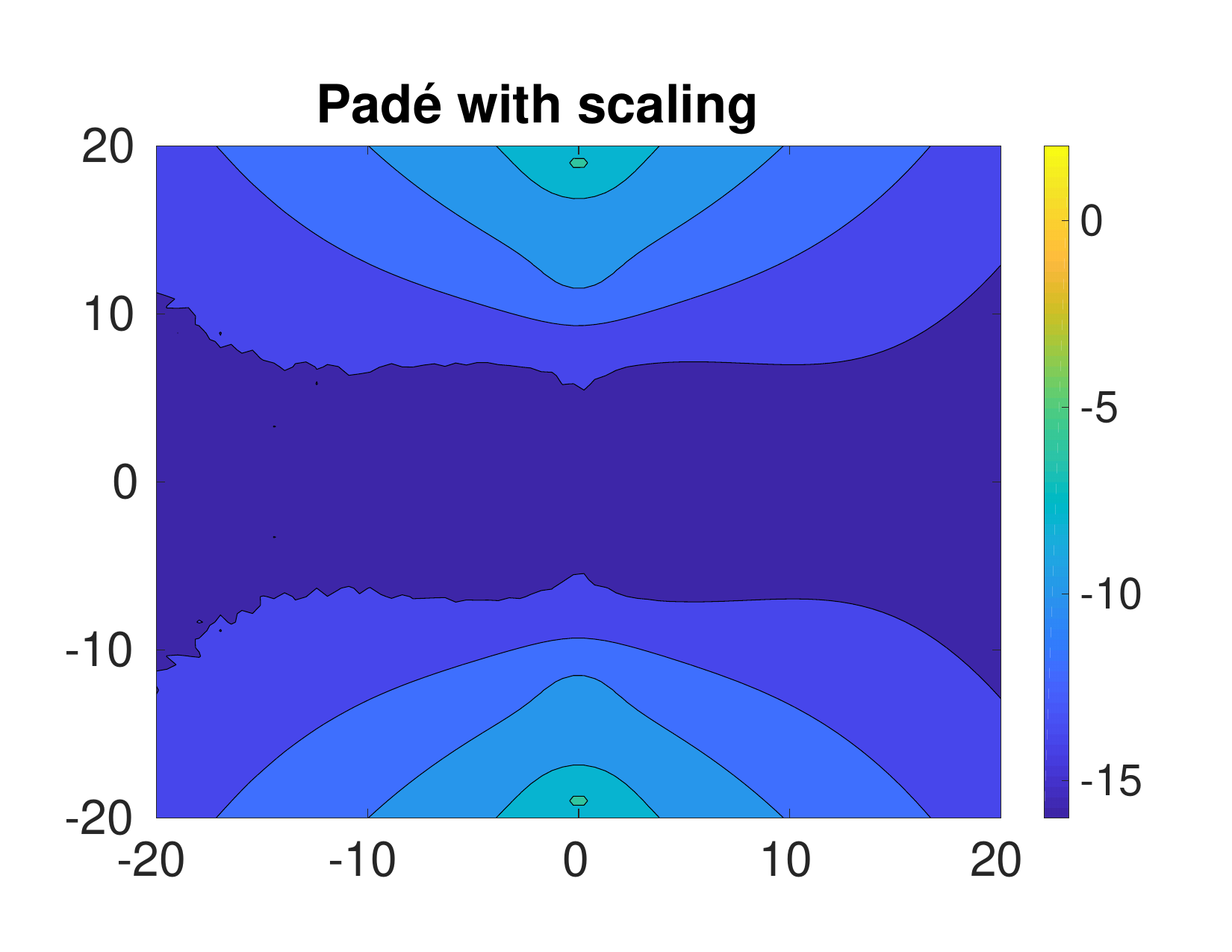} 

\includegraphics[width=0.45\textwidth]{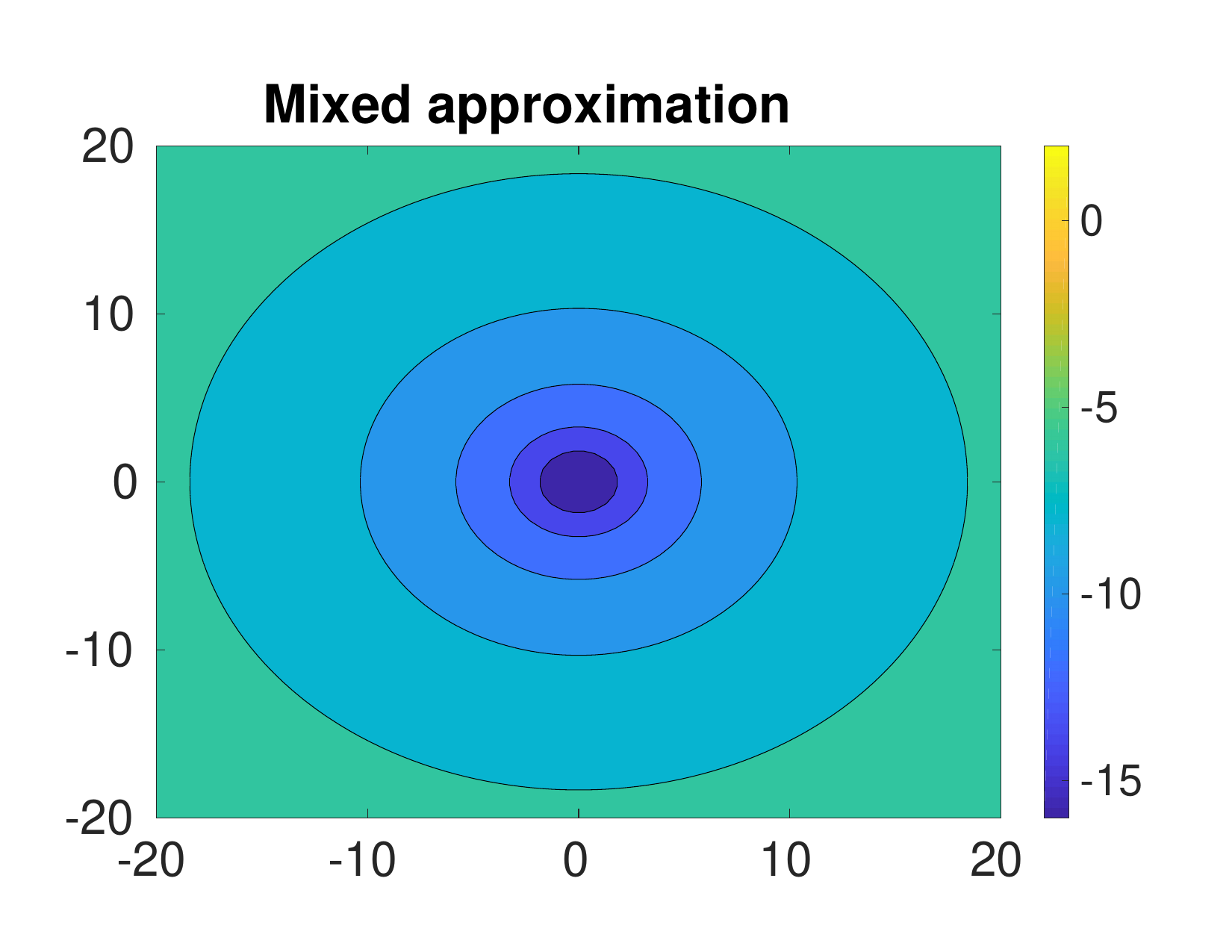} 
\includegraphics[width=0.45\textwidth]{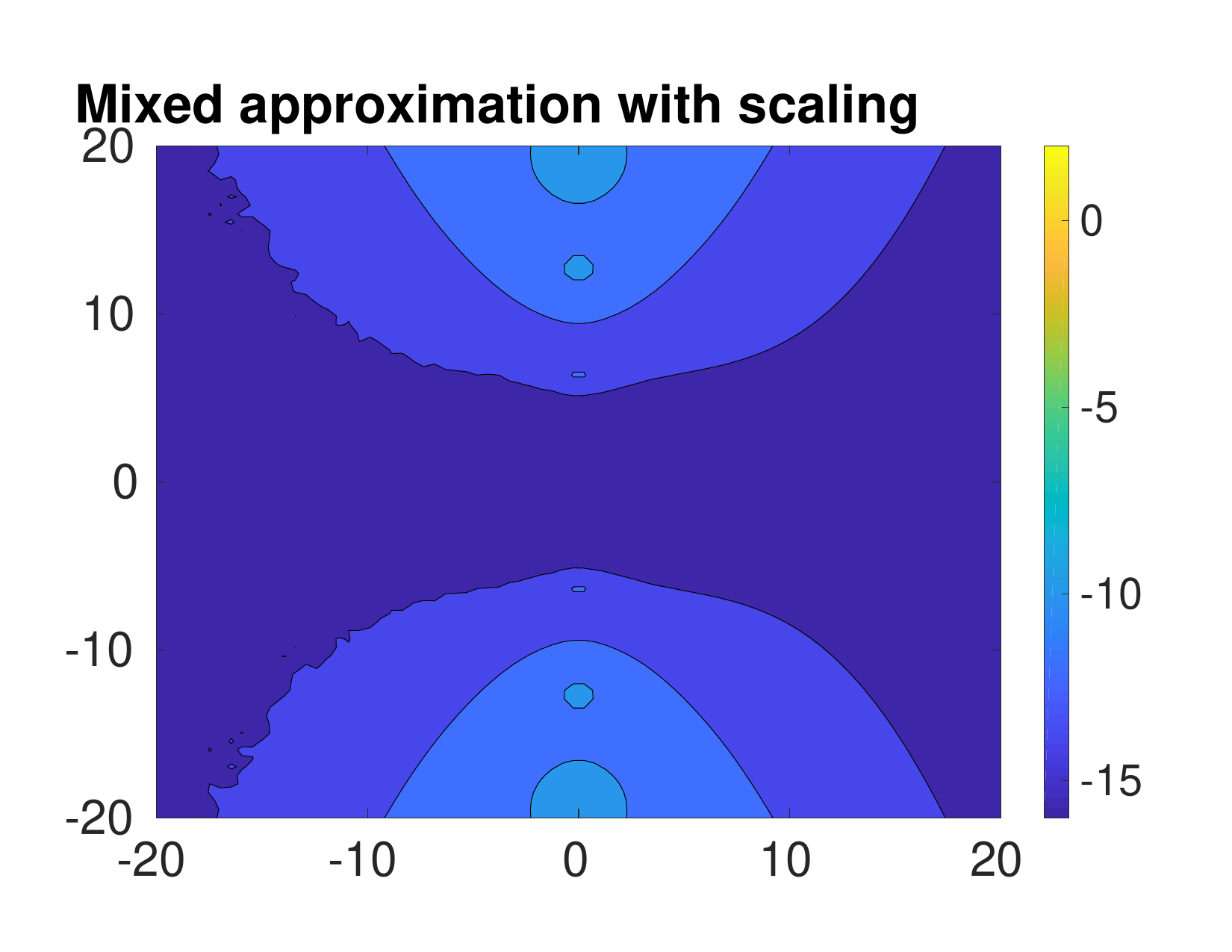} 

\includegraphics[width=0.45\textwidth]{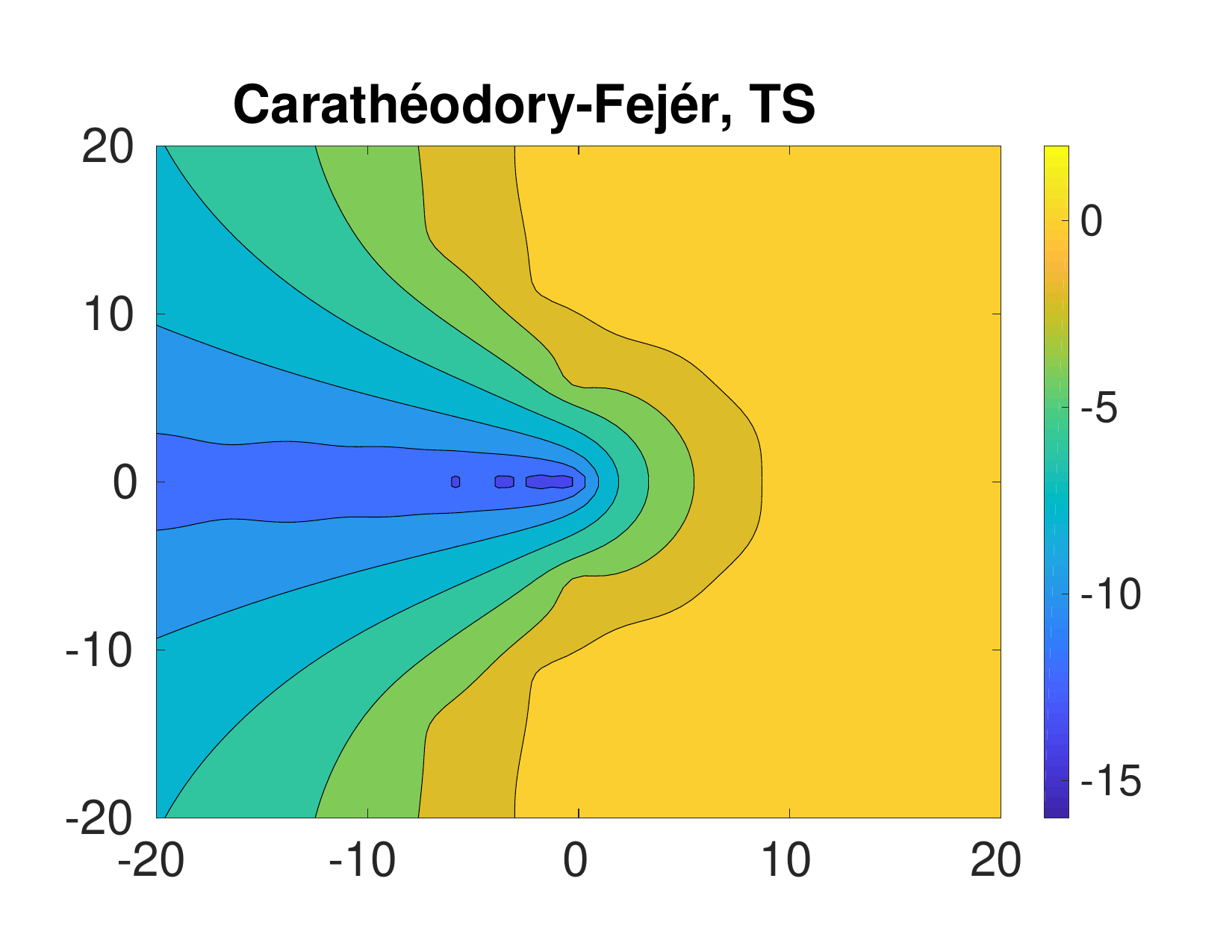} 
\includegraphics[width=0.45\textwidth]{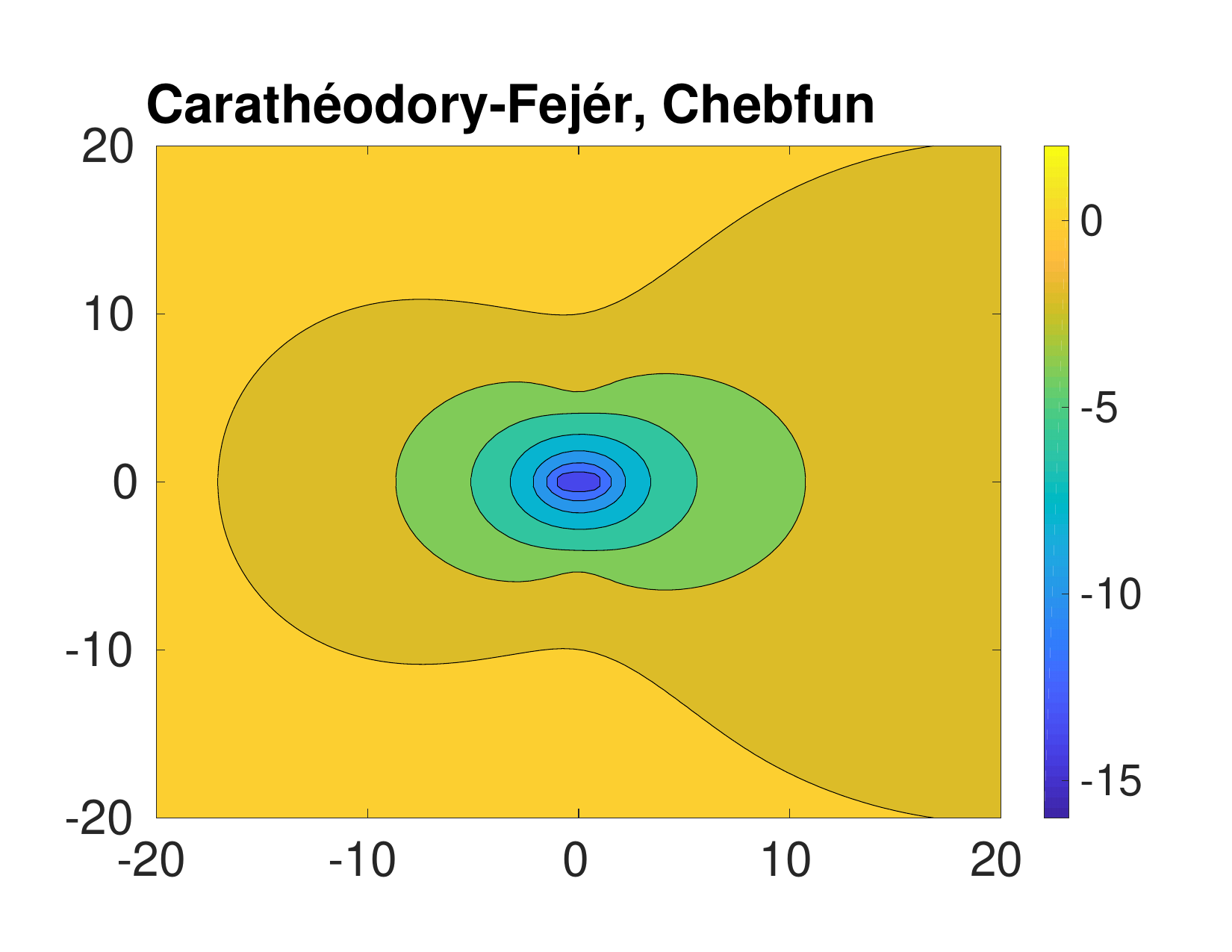} 

\caption{Absolute error for the approximation of the function $\psi_1(z)$ in the complex plane, represented in $\log_{10}$ scale. Here Pad\'e approximations are obtained as the reciprocals of the $(6,6)$-Pad\'e approximations to $\phi_1(z)$ given by {\tt padeapprox} in Chebfun \cite{driscoll2014chebfun} (without scaling) and by {\tt phipade} in EXPINT \cite{berland2007expint} (with scaling). Mixed approximations are computed as $\psi_{3,20}(z)$. The last two figures show results obtained using Carath\'eodory-Fej\'er approximation implemented as in \cite{ST} (left) and as in Chebfun (right), both without scaling.}\label{fig:scalarapprox}
\end{figure}

In the next tables we  compare the accuracy of polynomial and rational approximations for computing
both the matrix function $\psi_1(A)$ and the vector $\psi_1(A)\B b$, where $A\in \mathbb R^{d\times d}$ is symmetric
and
$\B b\in \mathbb R^d$. As claimed in the introduction
we are  interested in the case where $A$ is structured
so that we can assume that a  linear system $A \B x=\B f$ can  be solved in linear time (possibly up to logarithmic factors)
with a linear storage. All of our numerical tests are performed using MATLAB R2019a\footnote{The MATLAB code used for the numerical tests is available at {\tt http://people.cs.dm.unipi.it/boito/psi1.zip}.} For comparison purposes we use the Pad\'e approximation to the $\phi_1$-function as implemented in the EXPINT package \cite{berland2007expint}. 

Our test suite is as follows:

\begin{enumerate}

\item\label{test-poisson} $A$ is the $900\times 900$ block tridiagonal matrix obtained by discretizing the 2-dimensional Laplace operator with the usual 5-point rule on an $30\times 30$ grid, that is,
$A={\tt {gallery('poisson', 30)}}$;

\item\label{test-tridiag} $A$ is the  $d\times d$ Toeplitz tridiagonal  matrix  generated  by the command  $A={\tt {gallery('tridiag', d, -1, 4,-1)}}$, i.e.,
$$
A=\left[\begin{array}{rrrrr}
4 & -1 & 0 & \dots & 0\\
-1 & 4 &-1 & \ddots& \vdots\\
0 & \ddots &\ddots & \ddots & 0\\
\vdots&\ddots&\ddots&\ddots & -1\\
0&\dots & 0 &-1&4
\end{array}\right].
$$

\item\label{test-invtridiag}  $A$ is the order one $d\times d$ quasiseparable matrix generated as
  \[
  A=0.7*{\tt {inv(gallery('tridiag', d, d/2, [d:-1:1], d/2 ))}};
\] that is:
$$
A=\left[\begin{array}{rrrrr}
d & d/2 & 0 & \dots & 0\\
d/2 & d-1 & d/2 & \ddots& \vdots\\
0 & \ddots &\ddots & \ddots & 0\\
\vdots&\ddots&d/2&2 & d/2\\
0&\dots & 0 &d/2&1
\end{array}\right],
$$
with even $d$.

\item\label{test-kms} $A$ is  the $d\times d$ Kac-Murdock-Szeg\"o Toeplitz matrix such that $A_{ij}=0.8^{|i-j|}$. It can be generated in MATLAB as
$A={\tt {gallery('kms',d,0.8)}}$;

\item\label{test-smoke} $A$ is the $100\times 100$ ``smoke matrix'' generated as $A={\tt {gallery('smoke', 100)}}$. This is a nonsymmetric matrix.
\end{enumerate}

For comparison purposes, when $A$ is symmetric an accurate approximation  of $\psi(A)$  is determined by computing the spectral decomposition of $A$. If $A$ is nonsymmetric, an accurate approximation of $\psi(A)$ is determined using MATLAB's VPA environment with 32 digits.

For any given $n\geq 1$ the polynomial  and rational approximations  of  ${\phi_1( A)}^{-1}$ are  $\psi_{n,0}(A)$ and
$\psi_{m,n-m}(A)$, respectively.  The corresponding normwise relative errors are
\[
{err\_p}_n=\frac{\parallel \psi(A) -p_n(A)\parallel}{\parallel \psi(A)\parallel}, \quad
{err\_r}_{n,s}=\frac{\parallel \psi(A) -\psi_{n,s}(A)\parallel}{\parallel \psi(A)\parallel}.
\]
Tables \ref{table:poisson}, \ref{t1}, \ref{t2}, \ref{t3} and \ref{t4}   show the errors evaluated for polynomial, rational Pad\'e and mixed approximations (without scaling) applied to the test matrices. Pad\'e approximations are computed by inverting the approximation of $\phi_1(A)$ given by EXPINT; in these examples $\phi_1(A)$ is generally well conditioned.

At this time we are  just  interested in  comparing the accuracy of
different approximations without 
incorporating fast linear solvers in our code.  However, we point out that  for the considered examples fast
solvers exist that are expected to behave like Gaussian-elimination-based algorithms.   Observe that in
Example (\ref{test-invtridiag}) for $d=2056$ the
eigenvalues are out of the  disk   centered at the origin of radius $2 \pi$ and this explains the divergent behavior of the 
polynomial approximation. 

\begin{table}
\begin{center}
\caption{Errors for Example (\ref{test-poisson}). Polynomial approximation fails in this example (i.e., it yields errors $\gg 1$).}\label{table:poisson}
\begin{tabular}{cc}
\hline\noalign{\smallskip}
$k$& ${err\_}(k,k)$-Pad\'e \\ 
\noalign{\smallskip}\hline\noalign{\smallskip}
$5$ & $4.08$e$-7$ \\
$6$ & $9.59$e$-9$ \\
$7$ & $1.69$e$-10$ \\
$10$ & $7.00$e$-14$ \\
$20$ & $6.07$e$-14$ \\
$30$ & $5.85$e$-14$ \\
\noalign{\smallskip}\hline
\end{tabular}
\quad
\begin{tabular}{cc}
\hline\noalign{\smallskip}
$s$ & ${err\_}\psi_{3,s}$ \\ 
\noalign{\smallskip}\hline\noalign{\smallskip}
$10$ & $1.34$e$-7$\\
$20$ & $1.27$e$-9$\\
$30$ & $7.92$e$-11$\\
$40$ & $1.09$e$-11$\\
$50$ & $2.32$e$-12$\\
$100$ & $1.55$e$-14$\\
\noalign{\smallskip}\hline
\end{tabular}
\end{center}
\end{table}

\begin{table}
\begin{center}
\caption{Errors for Example (\ref{test-tridiag})}\label{t1}
\begin{tabular}{cccc}
\hline\noalign{\smallskip}
$d$& ${err\_p}_{53}$ & ${err\_(7,7)}$-Pad\'e & ${err\_\psi}_{3, 50}$ \\ 
 \noalign{\smallskip}\hline\noalign{\smallskip}
$256$ & $2.29$e$-1$ & $2.92$e$-11$ & $7.54$e$-13$\\
$512$ & $2.29$e$-1$ & $2.92$e$-11$ & $7.54$e$-13$\\
$1024$ & $2.30$e$-1$& $2.92$e$-11$ & $7.54$e$-13$ \\
$2048$ & $2.30$e$-1$ & $2.92$e$-11$ & $7.54$e$-13$\\
\noalign{\smallskip}\hline
\end{tabular}
\end{center}
\end{table}

\begin{table}
\begin{center}
\caption{Errors for Example (\ref{test-invtridiag})}\label{t2}
\begin{tabular}{ccccc}
\hline\noalign{\smallskip}
$d$& ${err\_p}_{53}$& ${err\_(7,7)}$-Pad\'e & ${err\_p}_{3,50}$ & ${err\_p}_{3,50}$ with scaling \\ 
 \noalign{\smallskip}\hline\noalign{\smallskip}
$256$ & $1.60$e$-12$ & $1.60$e$-12$ & $1.60$e$-12$ & $1.60$e$-12$\\
$512$ & $5.55$e$-12$ & $2.55$e$-12$ & $2.55$e$-12$ &$2.55$e$-12$ \\
$1024$ & $2.14$e$-4$& $1.97$e$-11$ & $1.97$e$-11$ & $1.97$e$-11$ \\
$2048$ & $4.33$e$+173$ & $6.46$e$-11$  & $1.36$e$-2$ & $7.89$e$-10$\\
\noalign{\smallskip}\hline
\end{tabular}
\end{center}
\end{table}

\begin{table}
\begin{center}
\caption{Errors for Example (\ref{test-kms})}\label{t3}
\begin{tabular}{ c c c c }
\hline\noalign{\smallskip}
$d$& ${err\_p}_{53}$ & ${err\_(7,7)}$-Pad\'e & ${err\_p}_{3, 50}$  \\ 
\noalign{\smallskip}\hline\noalign{\smallskip}
$256$ & $5.37$e$-15$ & $3.06$e$-13$ & $2.38$e$-14$\\
$512$ & $6.21$e$-15$ & $3.14$e$-13$ & $2.43$e$-14$\\
$1024$ & $7.32$e$-15$& $3.16$e$-13$ & $2.45$e$-14$ \\
$2048$ & $1.15$e$-14$ & $3.17$e$-13$ & $2.46$e$-14$ \\
\noalign{\smallskip}\hline
\end{tabular}
\end{center}
\end{table}

\begin{table}
\begin{center}
\caption{Errors for Example (\ref{test-smoke})}\label{t4}
\begin{tabular}{c c c }
\hline\noalign{\smallskip}
${err\_p}_{53}$ & ${err\_(7,7)}$-Pad\'e & ${err\_\psi}_{3, 50}$ \\ 
 \noalign{\smallskip}\hline\noalign{\smallskip}
 $6.96$e$-16$ & $1.14$e$-13$ &$6.66$e$-16$\\
\noalign{\smallskip}\hline
\end{tabular}
\end{center}
\end{table}

In order to investigate further the behavior of  the different approximations  under the occurrence of possibly
complex eigenvalues we  have compared the accuracy of  polynomial and rational methods for approximating the matrix
$\psi_1(A)$ where $A=\gamma  F $ and  $F$ is the generator of the circulant matrix algebra, that is,
the companion matrix associated with the polynomial $z^d -1$.  Since we know that the eigenvalues of $F$ lie on the unit circle the
parameter $\gamma$ is used to estimate the convergence of the methods when the magnitude of eigenvalues increase.  Table \ref{t5}
illustrate the errors for the  case $d=1024$.  The divergence of the polynomial approximation
for $\gamma\geq 8$ is in accordance with the theoretical results.

\begin{table}
\begin{center}
\caption{Errors for  scaled companion matrices. Here $\phi_1(A)$ may be very ill-conditioned for large $\gamma$, so the Pad\'e approximation is not reported.}\label{t5}
\begin{tabular}{cccc}
\hline\noalign{\smallskip}
$\gamma$& ${err\_p}_{53}$  & ${err\_r}_{3, 50}$ & ${err\_r}_{3, 50}$ with scaling \\ 
\noalign{\smallskip}\hline\noalign{\smallskip}
$2$ & $7.72$e$-12$ & $7.72$e$-12$ & $7.72$e$-12$\\
$4$ & $7.52$e$-10$ & $7.52$e$-12$ & $7.52$e$-12$\\
$8$ & $8.44$e$+10$& $7.83$e$-12$ & $7.83$e$-12$\\
 $16$ & $1.58$e$+42$  & $4.46$e$-11$ & $7.54$e$-12$\\
 $32$ & $3.11$e$+73$ & $2.68$e$-9$ & $9.53$e$-12$\\
 $64$ & $2.046$e$+105$  & $5.86$e$-7$ & $9.41$e$-12$\\
\noalign{\smallskip}\hline
\end{tabular}
\end{center}
\end{table}

Finally, for  $\gamma=64$ we consider in Table \ref{t6} the errors generated by rational approximations of increasing order.
The table suggests  that rational approximations of higher orders are suited to  give accurate  results  independently
of the magnitude of the eigenvalues of $A$. 

\begin{table}
\begin{center}
\caption{Errors for  rational approximations of increasing orders.}\label{t6}
\begin{tabular}{ccccc}
\hline\noalign{\smallskip}
$\gamma$& ${err\_r}_{3, 50}$& ${err\_r}_{3, 100}$ &  ${err\_r}_{3, 200}$ & ${err\_r}_{3, 400}$ \\ 
 \noalign{\smallskip}\hline\noalign{\smallskip}
$64$ & $5.86$e$-7$ & $4.65$e$-9$ & $5.87$e$-11$ & $2.24$e$-11$\\
\noalign{\smallskip}\hline
\end{tabular}
\end{center}
\end{table}

\section{An application to a multi-degree of freedom system}\label{sec:multidof}

As an example of application of the $\psi_1$ function to a concrete physical problem, we consider  
 a multi-degree of freedom system suggested in \cite{Fung1997}, Section 9.2. It is a 
 mass-spring oscillating system of $N$ masses $m_1,\ldots,m_N$ as in the following diagram:

\bigskip

\begin{tikzpicture}
\node[circle,fill=gray,inner sep=2.5mm] (a) at (0,0) {};
\draw[decoration={aspect=0.3, segment length=3mm, amplitude=3mm,coil},decorate] (-2,0) -- (a); 
\fill [pattern = north east lines] (-2.5,-0.5) rectangle (-2,1);
\draw[thick] (-2,-0.5) -- (-2,1);
\node at (0,-0.7) {$m_1$};
\node[circle,fill=gray,inner sep=2.5mm] (b) at (2,0) {};
\draw[decoration={aspect=0.3, segment length=3mm, amplitude=3mm,coil},decorate] (a) -- (b); 
\node at (2,-0.7) {$m_2$};
\draw[decoration={aspect=0.3, segment length=3mm, amplitude=3mm,coil},decorate] (b) -- (4,0); 
\filldraw (4.3,0) circle (0.5pt);
\filldraw (4.5,0) circle (0.5pt);
\filldraw (4.7,0) circle (0.5pt);
\node[circle,fill=gray,inner sep=2.5mm] (c) at (7,0) {};
\draw[decoration={aspect=0.3, segment length=3mm, amplitude=3mm,coil},decorate] (5,0) -- (c); 
\node at (7,-0.7) {$m_N$};
\draw[decoration={aspect=0.3, segment length=3mm, amplitude=3mm,coil},decorate] (c) -- (9,0);
\fill [pattern = north east lines] (9,-0.5) rectangle (9.5,1);
\draw[thick] (9,-0.5) -- (9,1);
\end{tikzpicture}

\bigskip

The elastic constants of the $N+1$ springs are denoted as $k_1,\ldots,k_{N+1}$, whereas $\lambda_1,\ldots,\lambda_N$ are the friction coefficients associated with each mass.

 The system is modeled by the second-order ordinary differential equation 
\begin{equation}
M \frac{d^2}{dt^2}{\bf x}(t) + C \frac{d}{dt}{\bf x}(t) +K {\bf x}(t) = {\bf f},\qquad t\in [0,\tau],\label{eq:second_order_dof}
\end{equation}
where $M$, $C$, $K$ are the mass, damping and stiffness matrices, respectively. The vector ${\bf f}$ is the external force, assumed to be constant, and ${\bf x}(t)$ is the displacement vector, that is, $x_i(t)$ gives the position of the $i$-th mass with respect to a local reference system. Both ${\bf f}$ and ${\bf x}(t)$ are unknown. Such a setup could be useful, for instance, if we need to determine electric charges associated with $m_1,\ldots,m_N$: these can be obtained by applying a uniform electric field to the system and finding the constant force exerted on the masses. To this end, we can choose the initial position and velocity of the masses, that is, ${\bf x}(0)$ and $\frac{d}{dt}{\bf x}(0)$, and let the system evolve for a time $\tau$. Then the final position and velocity ${\bf x}(\tau)$ and $\frac{d}{dt}{\bf x}(\tau)$ are measured. Equipped with these data, we seek to determine ${\bf f}$.

In this experiment, the physical system is simulated numerically, with an arbitrary choice of ${\bf f}$, to determine an ``exact'' solution ${\bf x}(t)$ and thus the initial and final values of position and velocity of the masses. This procedure guarantees that these boundary conditions, although overdetermined, are compatible. The goal is to retrieve the value of the external force ${\bf f}$. 

Now, equation \eqref{eq:second_order_dof} can be rewritten as a first-order problem as follows:
\begin{equation}
\frac{d}{dt}{\bf y}(t)=A{\bf y}(t)+{\bf p},\label{eq:first_order_dof}
\end{equation}
where
$$
{\bf y(t)}=\left[\begin{array}{c}
{\bf x(t)}\\ \frac{d}{dt}{\bf x}(t)\end{array}\right],\qquad
A=\left[\begin{array}{cc}
0 & I\\ -M^{-1}K & -M^{-1}C \end{array}\right],\qquad
{\bf p}=\left[\begin{array}{c}
0\\ {\bf f}\end{array}\right],
$$
as the mass matrix $M$ can be safely assumed to be invertible.
Consider boundary conditions  
\begin{equation}
{\bf y(0)}=\left[\begin{array}{c}
{\bf x(0)}\\ \frac{d}{dt}{\bf x}(0)\end{array}\right],\qquad
{\bf y(\tau)}=\left[\begin{array}{c}
{\bf x(\tau)}\\ \frac{d}{dt}{\bf x}(\tau)\end{array}\right],\label{eq:bc}
\end{equation}
which are known from the simulation. 

We now have a first-order differential problem defined by equation \eqref{eq:first_order_dof} with boundary conditions \eqref{eq:bc}, and we seek to determine the vector ${\bf p}$. Note that the computed ${\bf p}$ is expected to be formed by a zero block followed by the sought value of ${\bf f}$. 

This problem happens to be of the same kind as the model problem \eqref{bvp}, \eqref{bvpc}. As mentioned in the Introduction, the vector {\bf p} can be computed explicitly via equation \eqref{pformula}. The main computational effort when applying \eqref{pformula} consists in computing the product of $\psi_1(\tau A)$ times a vector. Here this is done using our mixed polynomial-rational approximation \eqref{eq:approx} for $\psi_1(\tau A)$, which requires to solve several linear systems with coefficient matrices given by diagonal shifts of a scalar multiple of $A^2$. Understanding the structure of $A$ and $A^2$ may help solve such linear systems via fast methods, rather than applying a slower, general-purpose solver.

With the hypotheses outlined above, the mass and the damping matrices are diagonal: 
$$M={\rm diag}(m_1,m_2,\ldots,m_N),\qquad 
C={\rm diag}(\lambda_1,\lambda_2,\ldots,\lambda_N),
$$
whereas the matrix $K$ has a symmetric tridiagonal form:
$$
K=\left[\begin{array}{ccccc}
k_1+k_2 & -k_2 &\\
-k_2 & k_2+k_3 & -k_3 \\
& \ddots & \ddots & \ddots &\\
&&-k_{N-1} & k_{N-1}+k_N & -k_N\\
&&& -k_N & k_N+k_{N+1} 
\end{array}\right].
$$
Clearly the matrix $A$ inherits a sparse/banded structure, which allows for a computationally cheap application of the mixed approximation formula. Indeed, the matrix $A^2$ has a $2\times 2$ block structure with tridiagonal blocks, so we can employ the well-known formula
\begin{equation}\label{inv2}
\left[\begin{array}{cc} 
\mathcal{A} & \mathcal{B}\\ \mathcal{C} & \mathcal{D}
\end{array}\right]^{-1}=
\left[\begin{array}{cc}
\mathcal{A}^{-1}+\mathcal{A}^{-1}\mathcal{B}\mathcal{S}^{-1}\mathcal{C}\mathcal{A}^{-1}&-\mathcal{A}^{-1}\mathcal{B}\mathcal{S}^{-1}\\
-\mathcal{S}^{-1}\mathcal{C}\mathcal{A}^{-1}&-\mathcal{S}^{-1}
\end{array}\right]
\end{equation}
and perform inversions using quasiseparable structure.\footnote{A detailed presentation of quasiseparable matrix structure is beyond the scope of this paper; we refer the interested reader to the book \cite{EGH_book}. For the purpose of this example let us recall that quasiseparability is a kind of matrix rank structure that allows for inversion in $O(N)$ operations, and that banded matrices belong to the quasiseparable class.} In particular, note that the Schur complement $\mathcal{S}=\mathcal{D}-\mathcal{C}\mathcal{A}^{-1}\mathcal{B}$ has quasiseparable rank at most three. Therefore, the computational cost of this approach is $O(N)$. While in general inversion methods based on the Schur complement may suffer from stability issues, such issues are not observed in this specific example.

\begin{remark}
Of course the structure of the matrix $A$ can be parameterized in different ways. For instance, one may observe that $A$ is banded and therefore also quasiseparable; note, however, that the bandwidth and the quasiseparability order increase with $N$. On the other hand, the quasiseparable generators are sparse themselves, which in practice may lead to computational savings.

In addition, if the masses in the physical system are all equal and have the same friction coefficient, and the springs all have the same elastic constant, then $A$ exhibits a low-order Toeplitz-like structure, which allows for fast inversion of $A^2/2\pi+k^2I$. 
\end{remark}

In this numerical test we take $\tau=1$ and $m_i=1$, $k_i=0.3$, $\lambda_i=0.1$ for all indices $i$. As initial conditions for $t=0$ we choose a displacement of $0.5$ for all masses and zero velocity. The constant external force is set to $0.5$, that is, ${\bf f}=[0.5,\ldots,0.5]^T$. In order to simulate the evolution of the physical model, we need to integrate the differential equation \eqref{eq:first_order_dof}: a preliminary computation via the MATLAB command {\tt ode45}, with absolute and relative tolerances set at $1e-13$, yields the boundary condition at $t=\tau$. With this setup we compute the vector ${\bf p}$ via mixed approximation and compare it to the ``exact'' one. The methods examined in this example are:
\begin{itemize}
\item
unstructured mixed polynomial-rational approximation (that is, computation of $\psi_{3,10}$ in \eqref{eq:approx} without taking advantage of the structure of $A$),
\item
structured block mixed polynomial-rational approximation (that is, computation of $\psi_{3,10}$ in \eqref{eq:approx}, where matrix inversion is performed via \eqref{inv2} in combination with quasiseparable inversion algorithms),
\item
approximation of $\psi_1(A)$ via EXPINT,
\item
approximation of $\psi_1(A)$ via {\tt expm}.
\end{itemize}
The results are shown in Tables \ref{table:N50}--\ref{table:N1000}, for $N=50, 100, 500, 1000$. The tables report the norm of the $(1:N)$-block of the computed vector ${\bf p}$, which should ideally be zero, and the absolute and relative errors on the $(N+1:2N)$-block corresponding to the force vector. The quality of the {\tt expm} approximation tends to deteriorate for growing $N$, whereas both the structured and the unstructured mixed approximation are as accurate as EXPINT. 

\begin{table}
\begin{center}
\caption{Errors for $N=50$.}\label{table:N50}
\begin{tabular}{lccc}  
\hline\noalign{\smallskip}
& $\|{\bf p}(1:N)\|_2$ & abs. err. on ${\bf f}$ & rel. err. on ${\bf f}$\\
\noalign{\smallskip}\hline\noalign{\smallskip}
Unstructured mixed & $1.52$e$-14$ & $1.06$e$-14$ &  $2.98$e$-15$\\
$2\times 2$ QS mixed & $1.51$e$-14$ & $1.05$e$-14$ & $2.98$e$-15$\\
EXPINT & $1.51$e$-14$ & $1.06$e$-14$ &  $2.99$e$-15$\\
{\tt expm} & $1.77$e$-13$ & $7.45$e$-13$ &  $2.11$e$-13$\\
\noalign{\smallskip}\hline
\end{tabular}
\end{center}
\end{table}

\begin{table}
\begin{center}
\caption{Errors for $N=100$.}\label{table:N100}
\begin{tabular}{lccc}  
\hline\noalign{\smallskip}
& $\|{\bf p}(1:N)\|_2$ & abs. err. on ${\bf f}$ & rel. err. on ${\bf f}$\\
\noalign{\smallskip}\hline\noalign{\smallskip}
Unstructured mixed & $1.52$e$-14$ & $1.06$e$-14$ &  $2.13$e$-15$\\
$2\times 2$ QS mixed & $1.51$e$-14$ & $1.05$e$-14$ & $2.11$e$-15$\\
EXPINT & $1.52$e$-14$ & $1.09$e$-14$ &  $2.17$e$-15$\\
{\tt expm} & $9.69$e$-13$ & $3.88$e$-12$ &  $7.76$e$-13$\\
\noalign{\smallskip}\hline
\end{tabular}
\end{center}
\end{table}

\begin{table}
\begin{center}
\caption{Errors for $N=500$.}\label{table:N500}
\begin{tabular}{lccc}  
\hline\noalign{\smallskip}
& $\|{\bf p}(1:N)\|_2$ & abs. err. on ${\bf f}$ & rel. err. on ${\bf f}$\\
\noalign{\smallskip}\hline\noalign{\smallskip}
Unstructured mixed & $1.52$e$-14$ & $1.15$e$-14$ &  $1.03$e$-15$\\
$2\times 2$ QS mixed & $1.51$e$-14$ & $1.06$e$-14$ & $9.48$e$-16$\\
EXPINT & $1.54$e$-14$ & $1.31$e$-14$ &  $1.18$e$-15$\\
{\tt expm} & $5.05$e$-11$ & $1.74$e$-10$ &  $1.56$e$-11$\\
\noalign{\smallskip}\hline
\end{tabular}
\end{center}
\end{table}

\begin{table}
\begin{center}
\caption{Errors for $N=1000$.}\label{table:N1000}
\begin{tabular}{lccc}  
\hline\noalign{\smallskip}
& $\|{\bf p}(1:N)\|_2$ & abs. err. on ${\bf f}$ & rel. err. on ${\bf f}$\\
\noalign{\smallskip}\hline\noalign{\smallskip}
Unstructured mixed & $1.52$e$-14$ & $1.26$e$-14$ &  $7.96$e$-16$\\
$2\times 2$ QS mixed & $1.51$e$-14$ & $1.07$e$-14$ & $6.75$e$-16$\\
EXPINT & $1.57$e$-14$ & $1.47$e$-14$ &  $9.28$e$-16$\\
{\tt expm} & $3.25$e$-10$ & $9.96$e$-10$ &  $6.30$e$-11$\\
\noalign{\smallskip}\hline
\end{tabular}
\end{center}
\end{table}

\section{Bounds on the decay of the reciprocal of the $\phi_1$-function}\label{three}

In this section we investigate the approximate rank structure of  $\psi_1(A)$  for a suitable $A$. Specifically,
as an application of Theorem \ref{main} we can deduce {\em a priori} bounds on the decay of the $\psi_1$-function applied to  symmetric banded matrices.

Now, let $A\in\mathbb{R}^{d\times d}$ be a symmetric banded matrix. Denote as $m$ the half-bandwidth of $A$,
that is, $A_{i,j}=0$ if $|i-j|>m$. It is well-known that the off-diagonal entries of $\psi_1(A)$
exhibit a decay behavior in absolute value (the same is true of any other function of $A$
that is well-defined and sufficiently regular \cite{BG1}). We can use the  $(n,s)$-mixed polynomial-rational
approximation \eqref{eq:approx}
to give bounds on this decay behavior.

Define 
\[
r_{n,s}(z)=2(-1)^n \left(\frac{z}{2\pi}\right)^{2(n+1)}\sum_{k=1}^s \frac{1}{k^{2n}\left(\left(\frac{z}{2\pi}\right)^{2}+k^2\right)},
\]
\[
p_n(z)=1-\frac12 z+\sum_{i=0}^{n-1}z^{2(i+1)}\frac{B_{2(i+1)}}{(2(i+1))!}
\]
which we will call the rational and the polynomial  part of \eqref{eq:approx}, respectively,  and let 
$$
\varepsilon_{n,s}(z)=\psi_1(z)-p_n(z)-r_{n,s}(z)
$$
 be the $(n,s)$-approximation error. We have
\begin{equation}\label{eq:bound1}
|[\psi_1(A)]_{i,j}|\leq |[p_n(A)]_{i,j}|+|[r_{n,s}(A)]_{i,j}|+|[\varepsilon_{n,s}(A)]_{i,j}|.
\end{equation}
Observe that $p_n(A)$ is a banded matrix with half-bandwidth $2nm$. So, if we choose $i,j$ such that $|i-j|>2nm$, then
$|[p_n(A)]_{i,j}|=0$ and we only need to focus on the rational and error terms.

For the rational term, let us start by giving a bound on
\[
\tilde{r}_{n,s}(A):=\sum_{k=1}^s \frac{1}{k^{2n}}\left(\left(\frac{A}{2\pi}\right)^{2}+k^2I_d\right)^{-1}.
\]
The matrix $A_k=\left(\frac{A}{2\pi}\right)^{2}+k^2I_d$ is positive definite with
semi-bandwidth $2m$, and several exponential decay bounds for the inverse of a positive definite
matrix have been proposed in the literature.
Prop. 2.2 from \cite{DMS}, for instance, gives
\[
|[(A_k)^{-1}]_{i,j}|\leq C_k \lambda_k^{|i-j|},
\]
where
\begin{equation}\label{bb1}
a_k=k^2, \quad b_k=\left(\frac{\rho(A)}{2\pi}\right)^2 +k^2,  \quad r_k=\frac{b_k}{a_k},
\end{equation}
\begin{equation}\label{bb2}
\lambda_k=\left(\frac{\sqrt{r_k}-1}{\sqrt{r_k}+1}\right)^{1/2m}, \quad C_k=\max\left\{a_k^{-1},
\frac{(1+\sqrt{r_k})^2}{2a_kr_k}\right\}, 
\end{equation}
where $\rho(A)$ is the spectral radius of $A$ and $0<a_k<b_k$ are such that the spectrum of $A_k$ is contained in $[a_k,b_k]$.
Therefore we have
\[
\left|\left[\sum_{k=1}^s \frac{1}{k^{2n}}\left(\left(\frac{A}{2\pi}\right)^{2}+k^2I_d\right)^{-1}\right]_{i,j}\right|\leq
\sum_{k=1}^s\frac{1}{k^{2n}}C_k \lambda_k^{|i-j|}
\]
for all indices $i,j$.
Now recall that $\left(\frac{A}{2\pi}\right)^{2(n+1)}$ is a banded matrix of bandwidth $2m(n+1)$. So we have:
\begin{eqnarray*}
\left|[r_{n,s}(A)]_{i,j}\right|=\left|\sum_{\nu=1}^d[\tilde{r}_{n,s}(A)]_{i,\nu}\left[\left(\frac{A}{2\pi}\right)^{2(n+1)}\right]_{\nu,j}\right|\nonumber\\
=\left|\sum_{\nu=j-2m(n+1)}^{j+2m(n+1)}[\tilde{r}_{n,s}(A)]_{i,\nu}\left[\left(\frac{A}{2\pi}\right)^{2(n+1)}\right]_{\nu,j}\right|\nonumber\\
\leq\sum_{\nu=j-2m(n+1)}^{j+2m(n+1)}\left\|\frac{A}{2\pi}\right\|_2^{2(n+1)}\left(\sum_{k=1}^s\frac{C_k}{k^{2n}}\lambda_k^{|i-\nu|}\right)\nonumber\\
=\left\|\frac{A}{2\pi}\right\|_2^{2(n+1)}\sum_{\nu=j-2m(n+1)}^{j+2m(n+1)}\sum_{k=1}^s\frac{C_k}{k^{2n}}\lambda_k^{|i-\nu|},\label{bound_rational}
\end{eqnarray*}
where in the sums over $\nu$ it is understood that $1\leq\nu\leq d$.

Let us now bound the error term. Define
\[
\varepsilon_{n,s}(A)=\tilde{\varepsilon}_{n,s}(A)\left(\frac{A}{2\pi}\right)^{2(n+1)}\quad\textnormal{where}\;\;
\tilde{\varepsilon}_{n,s}(A)=\sum_{k=s+1}^\infty \frac{1}{k^{2n}}\left(\left(\frac{A}{2\pi}\right)^{2}+k^2I_d\right)^{-1}.
\]
Therefore we find that 
\[
|[{\varepsilon}_{n,s}(A)]_{i,j}|\leq\|{\varepsilon}_{n,s}(A)\|_2\leq\|\tilde{\varepsilon}_{n,s}(A)\|_2
\left\|\frac{A}{2\pi}\right\|_2^{2(n+1)}.
\]
Let us bound $\|\tilde{\varepsilon}_{n,s}(A)\|_2$.
Let $A=UDU^H$ be the eigendecomposition of $A$ and denote the spectrum of $A$ as $\sigma(A)$; recall that $\sigma(A)$ is real.
We have 
\[\|\tilde{\varepsilon}_{n,s}(A)\|_2\leq\|\tilde{\varepsilon}_{n,s}(D)\|_2=\max_{x\in\sigma(A)}|\tilde{\varepsilon}_{n,s}(x)|
\]
 and moreover
\[
|\tilde{\varepsilon}_{n,s}(x)|=\left|\sum_{k=s+1}^\infty\frac{1}{k^{2n}}\left(\left(\frac{x}{2\pi}\right)^2+k^2\right)^{-1}\right|\leq
\sum_{k=s+1}^\infty\frac{1}{k^{2n+2}}=\zeta(2n+2)-\sum_{k=1}^s\frac{1}{k^{2n+2}},
\]
from which we deduce
\[
  |[{\varepsilon}_{n,s}(A)]_{i,j}|\leq\left\|\frac{A}{2\pi}\right\|_2^{2(n+1)}\left(\zeta(2n+2)-\sum_{k=1}^s\frac{1}{k^{2n+2}}
  \right),\label{bound_error}
\]
where $\zeta(s)$ is the Riemann zeta function.

Summing up  the following estimates are obtained  for the entries of $\psi_1(A)$.
\begin{theorem}\label{btheo}
  Let $A\in \mathbb R^{d\times d}$ be  a symmetric banded matrix with half-bandwidth $m$.  For all
  $(n, s)\in \mathbb N\times \mathbb N$  it holds 
\[
 |[\psi_1(A)]_{i,j}|\leq |[r_{n,s}(A)]_{i,j}|+|[\varepsilon_{n,s}(A)]_{i,j}|,\quad |i-j|>2mn,
\]
where
\[
|[r_{n,s}(A)]_{i,j}|\leq \left\|\frac{A}{2\pi}\right\|_2^{2(n+1)}\sum_{\nu=j-2m(n+1)}^{j+2m(n+1)}\sum_{k=1}^s\frac{C_k}{k^{2n}}\lambda_k^{|i-\nu|}
\]
and
\[
|[\varepsilon_{n,s}(A)]_{i,j}|\leq \left\|\frac{A}{2\pi}\right\|_2^{2(n+1)}\left(\zeta(2n+2)-\sum_{k=1}^s\frac{1}{k^{2n+2}}\right),
\]
and $C_k$ and $\lambda_k$ are given in \eqref{bb1},\eqref{bb2}.
\end{theorem}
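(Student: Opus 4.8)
The plan is to obtain the bound directly from the exact mixed polynomial-rational decomposition furnished by Theorem \ref{main}: writing $\psi_1(A)=p_n(A)+r_{n,s}(A)+\varepsilon_{n,s}(A)$, where the finite sum $r_{n,s}$ and the tail $\varepsilon_{n,s}$ together reconstitute the full series $\sum_{k=1}^{\infty}$, the entrywise triangle inequality gives \eqref{eq:bound1}. The first step is to record that $p_n$ has degree $2n$, so $p_n(A)$ is banded with half-bandwidth $2mn$; hence $[p_n(A)]_{i,j}=0$ as soon as $|i-j|>2mn$, and in that range only the rational and error contributions remain. This yields the displayed inequality for $|[\psi(A)]_{i,j}|$.

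For the rational term I would use the factorization $r_{n,s}(A)=\widetilde r_{n,s}(A)\,(A/2\pi)^{2(n+1)}$ with $\widetilde r_{n,s}(A)=\sum_{k=1}^{s}k^{-2n}A_k^{-1}$ and $A_k=(A/2\pi)^{2}+k^2I_d$. The factor $(A/2\pi)^{2(n+1)}$ is banded with half-bandwidth $2m(n+1)$, so the product $[r_{n,s}(A)]_{i,j}=\sum_{\nu}[\widetilde r_{n,s}(A)]_{i,\nu}\,[(A/2\pi)^{2(n+1)}]_{\nu,j}$ only runs over the indices $\nu$ with $|\nu-j|\le 2m(n+1)$, and each such factor is bounded in modulus by $\|A/2\pi\|_2^{2(n+1)}$. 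Each $A_k$ is positive definite with half-bandwidth $2m$ and spectrum in $[a_k,b_k]$ as in \eqref{bb1}, so Prop.\ 2.2 of \cite{DMS} gives $|[A_k^{-1}]_{i,\nu}|\le C_k\lambda_k^{|i-\nu|}$ with $C_k,\lambda_k$ as in \eqref{bb2}. Summing over $k$ against the weights $k^{-2n}$ and over the finitely many admissible $\nu$ produces the stated bound on $|[r_{n,s}(A)]_{i,j}|$.

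For the error term I would factor $\varepsilon_{n,s}(A)=\widetilde\varepsilon_{n,s}(A)\,(A/2\pi)^{2(n+1)}$ with $\widetilde\varepsilon_{n,s}(A)=\sum_{k=s+1}^{\infty}k^{-2n}A_k^{-1}$, bound a single entry by the spectral norm, and apply submultiplicativity: $|[\varepsilon_{n,s}(A)]_{i,j}|\le\|\widetilde\varepsilon_{n,s}(A)\|_2\,\|A/2\pi\|_2^{2(n+1)}$. Since $A$ is symmetric, the eigendecomposition $A=UDU^H$ reduces $\|\widetilde\varepsilon_{n,s}(A)\|_2$ to $\max_{x\in\sigma(A)}|\widetilde\varepsilon_{n,s}(x)|$, and because $\sigma(A)\subset\mathbb R$ each scalar factor satisfies $|((x/2\pi)^2+k^2)^{-1}|\le k^{-2}$, whence $|\widetilde\varepsilon_{n,s}(x)|\le\sum_{k=s+1}^{\infty}k^{-2n-2}=\zeta(2n+2)-\sum_{k=1}^{s}k^{-2n-2}$. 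Assembling the three displays completes the argument.

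The routine pieces are the triangle inequality, submultiplicativity of $\|\cdot\|_2$, and the convergent zeta tail; the only genuinely substantive ingredient is the correct use of the \cite{DMS} exponential-decay estimate for banded positive definite inverses, together with the bandwidth bookkeeping — namely checking that $(A/2\pi)^{2(n+1)}$ truncates the $\nu$-sum to a window of width $4m(n+1)+1$ about $j$, and that $p_n(A)$ genuinely vanishes once $|i-j|>2mn$, so that no extra polynomially decaying term leaks into the bound. I expect this bookkeeping, rather than any single estimate, to be where care is needed.
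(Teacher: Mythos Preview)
Your proposal is correct and follows essentially the same route as the paper: decompose $\psi_1(A)=p_n(A)+r_{n,s}(A)+\varepsilon_{n,s}(A)$, kill the polynomial part by bandedness for $|i-j|>2mn$, bound the rational part by combining the \cite{DMS} decay estimate for each positive definite $A_k=(A/2\pi)^2+k^2I_d$ with the banded factor $(A/2\pi)^{2(n+1)}$ to truncate the $\nu$-sum, and bound the tail via the spectral norm, diagonalization, and the zeta remainder. The bookkeeping points you flag (degree of $p_n$, half-bandwidth of the power, window width for $\nu$) are exactly the ones the paper handles, and your treatment of them is accurate.
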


To illustrate the significance of these bounds  we  present  in Figure \ref{fig:comparison}
numerical comparisons  with  other
existing bounds  deduced from \cite{BG1}.  Recall that these latter estimates 
are based on a  theoretical result on the best
degree-$k$ polynomial approximation of the function $\psi_1(z)$  on $[-1,1]$
that cannot be  explicitly computed. The corresponding
best polynomial approximation error  satisfies 
\[
E_k(\psi_1)\leq\frac{2M(\chi)}{\chi ^k(\chi-1)}
\]
and depends on a parameter $\chi$ that defines a Bernstein ellipse in the complex plane, where the function is analytic.
For the case considered in Figure \ref{fig:comparison} a good choice is $\chi=12$. 
If the spectrum of the matrix is not contained in $[-1,1]$,
one needs to scale the matrix, that is, apply the function $\psi_{1,\xi}(z)=\frac{\xi z}{e^{\xi z}-1}$ to $A/\xi$,
for a suitable choice of $\xi$. Then the poles of the function closest to zero are at $\pm \frac{2\pi i}{\xi}$;
the minor semi axis $\beta$ of the ellipse should be chosen slightly smaller than $\frac{2\pi}{\xi}$ and
$\chi=\beta+\sqrt{\beta^2+1}$. 

\begin{figure}
  \begin{center}
\includegraphics[width=0.9\textwidth]{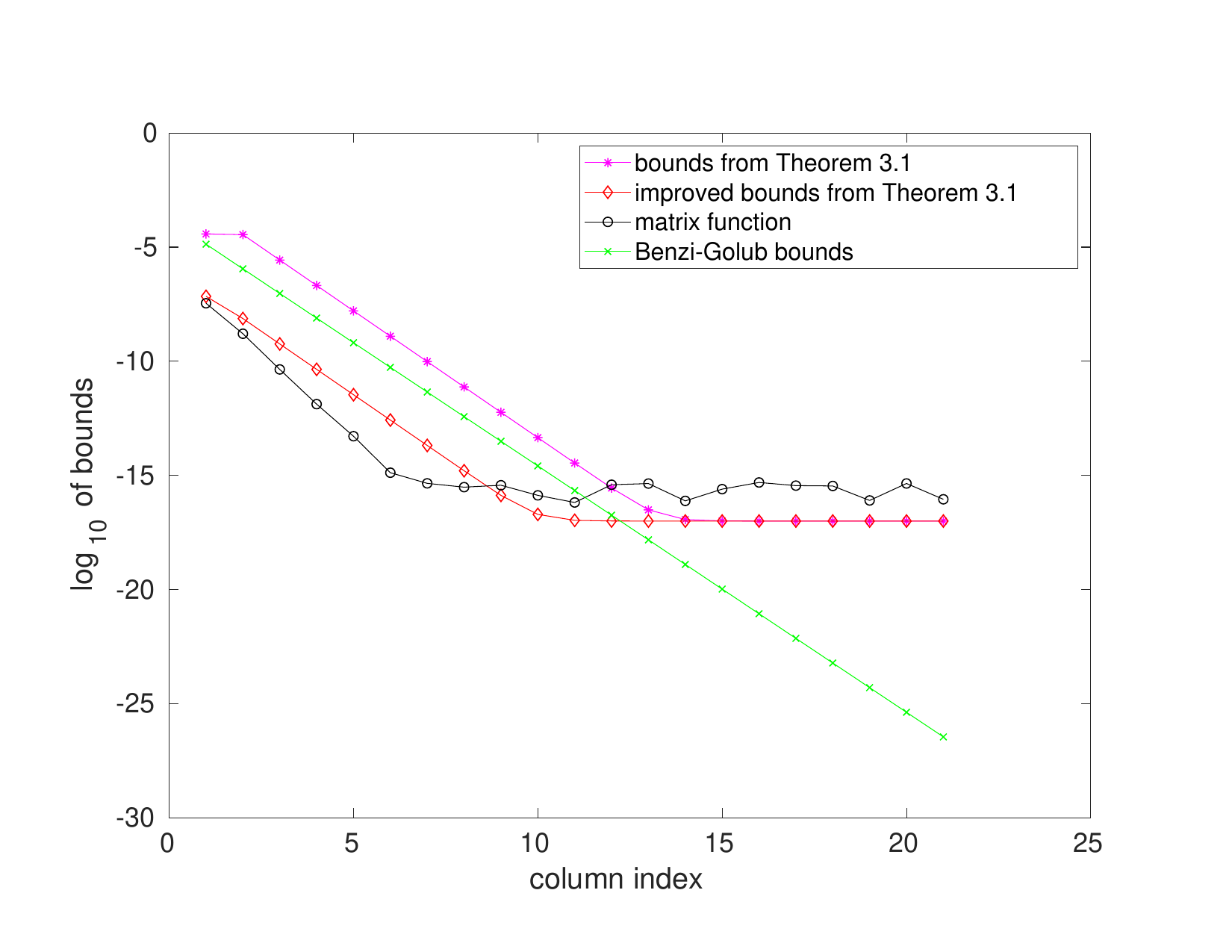}
\caption{Bounds for a symmetric tridiagonal Toeplitz matrix $A$ with spectrum lying in  $[-1,1]$. The figure shows the behavior of bounds and of the absolute value of the actual matrix function on the first matrix row. In particular, bounds from Theorem 3.1 are shown in purple, whereas the bounds in red are obtained by explicitly computing the product between $\left|(\frac{A}{2\pi})^{2(n+1)}\right|$ and the matrix of bounds on $r_{n,s}(A)$.}\label{fig:comparison}
\end{center}
  \end{figure}

We see that  the  proposed mixed polynomial-rational
approximation  and the best polynomial approximation exhibit a similar decaying profile.

\section{Conclusion and Future Work}\label{four}
In this paper we have introduced a family of rational approximations of the  
reciprocal of the $\phi_1$-function encountered in exponential integration  
methods. This family extends customary approximations based on  the Taylor 
series by showing better convergence properties. Therefore, the novel formulas 
are particularly suited when applied for computing the reciprocal of the 
$\phi_1$  matrix function  of a structured matrix  admitting fast and 
numerically robust linear solvers. Mixed polynomial-rational approximations of 
a meromorphic function  based on  the Dunford-Cauchy integral formula that are 
suited for computation with rank-structured matrices have been recently 
proposed in \cite{MaRo}. Theoretical and  computational comparisons between  
the two families of approximations of $\psi_1(z)={\phi_1(z)}^{-1}$ is an 
ongoing work. Also a more detailed comparison of the approaches  based on the  
Mittag-Leffler theorem and the rational Carath\'eodory-Fej\'er approximation 
\cite{ST} for evaluating $\psi_1(z)$ would be interesting. 

Another natural continuation of our results in Section 2 is the complete
numerical solution of the inverse and nonlocal problems for differential 
equations as (\ref{bvp}), (\ref{bvpc}) and (\ref{laris}).



\begin{acknowledgements} 
Part of the first author's work was done while at XLIM--MATHIS, Universit\'e de
Limoges (UMR CNRS 7252) and on secondment in the AriC group at LIP, ENS de Lyon
(CNRS, ENS Lyon, Inria, UCBL).

We thank I.~V.~Tikhonov for his valuable remarks.
\end{acknowledgements}

\bibliographystyle{spmpsci}
\bibliography{matrixbib}

\end{document}